\def \Det {\mathrm{Det}}
\newcommand{\bel}{\begin{equation}\label }
\newcommand{\nobel}{\begin{equation}}
\newcommand{\ee}{\end{equation}}
      \newtheorem{theorem}{Theorem}[section]
       \newtheorem{proposition}[theorem]{Proposition}
       \newtheorem{lemma}[theorem]{Lemma}
       \newtheorem{remark}{Remark}[section]
\theoremstyle{definition}
\def \P {{\mathbb P}}
\def \E {{\mathbb E}}
\def \e {{\mathbf e}}
\def \R {{\mathbb R}}
\def \K {{\mathcal K}}
\def \mk {{\mathcal {M K}}}
\def \G {{\mathcal G}}
\def \W {{\mathcal W}}
\def\tr{\;\mathrm{tr}\,}
\def\j{\mathrm{J}}
\def\<{\langle}
\def\>{\rangle}
\begin{document}

\title[Independence characterization for Wishart and Kummer random matrices]{Independence characterization for Wishart and Kummer random matrices}
\authors{Bartosz Ko{\l}odziejek, Agnieszka Piliszek}
\address{Wydzia{\l} Matematyki i Nauk Informacyjnych\\
Politechnika Warszawska\\
Koszykowa 75\\
00-662  Warszawa, Poland}
\email{A.Piliszek@mini.pw.edu.pl}
\email{B.Kolodziejek@mini.pw.edu.pl}
\date{\today}

\begin{abstract}
We generalize the following univariate characterization of the Kummer and Gamma distributions to the cone of symmetric positive definite matrices: let $X$ and $Y$ be independent, non-degenerate random variables valued in $(0, \infty)$, then $U= Y/(1+X)$ and $V = X(1+U)$ are independent if and only if $X$  follows the Kummer distribution and $Y$ follows the the Gamma distribution with appropriate parameters. We solve a related functional equation in the cone of symmetric positive definite matrices, which is our first main result and apply its solution to prove the characterization of Wishart and matrix-Kummer distributions, which is our second main result. 
\end{abstract}

\maketitle

\section{Introduction}
\label{sec:1}
In 1940s Bernstein noticed that if $X$ and $Y$ are independent, then $X-Y$ and $X+Y$ are independent if and only if $X$ and $Y$ are Gaussian, \cite{Be41}. This observation suggested that independence can mean more than one could think. Many other examples of so-called independence characterizations have been identified through the years. One of the highlights in this area is Lukacs' (1955) characterization of the Gamma distribution by the independence of $X+Y$ and $X/(X+Y)$, \cite{Lu55}.
In 1996 Casalis and Letac wrote, that independence characterizations of distributions \textit{give insight into the laws of nature and may reveal quite beautiful mathematics},  \cite{CL96}. In the cited paper they showed a new way, compared to Olkin and Rubin, \cite{OR62}, to generalize the Lukacs theorem to symmetric positive definite matrices.

Another celebrated characterization origins from the Matsumoto-Yor (MY) property, see \cite{MY01}, \cite{MY03}, which says that for independent $X$ and $Y$ having GIG and Gamma distributions, random variables $1/(X+Y)$ and $1/X - 1/(X+Y)$ are also independent.  First characterization of GIG and Gamma distributions through this property was given in \cite{LW00}. It has been widely generalized and modified: symmetric cones \cite{Ko15_s}, free probability \cite{KSz17} and others \cite{BW03}, \cite{MW03}, \cite{MW06}. In 2012 a whole family of independence properties of a MY type  was given by Koudou and Vallois \cite{KV12}. The latter paper  presents all possible distributions of independent $X$ and $Y$ for which there exists a (very regular, see \cite{KV12}) function $f$ such that $f(X+Y)$ and $f(X) - f(X+Y)$ are also independent. The Lukacs property corresponds to  $f(x) = \log x$ and the MY property to $f(x) = 1/x$. Another important case identified in \cite{KV12} was $f(x)=\ln(1+ 1/x)$. That one concerns Kummer and Gamma distributions and can be formulated as follows: Let $X$ has the Kummer distribution $\K (a, b, c)$ with density 
\bel{kumer} f_X(x) \propto x^{a-1} (1+x)^{-(a+b)} e^{-cx} I_{(0,\infty)}(x)\ee
and $Y$ has the Gamma distribution  $\G(b,c)$ with density
$$ f_Y(y) \propto y^{b-1} e^{-cy} I_{(0, \infty)}(y),$$ 
where $a,b,c>0$. Suppose that $X$ and $Y$ are independent and let
\bel{KV1}
U=X+Y \quad \mbox{and} \quad V = \frac{1+{1}/(X+Y)}{1+{1}/{X}}
.\ee Then $U$ and $V$ are also independent. 

To derive related characterization, however, the authors  needed to impose technical conditions of differentiability (\cite{KV12}) or local integrability (\cite{KV11}) of logarithms of strictly positive densities.  Recently a regression version of this characterization under natural integrability assumption (and with no assumptions concerning densities) was given in \cite{We15}. In \cite{PWkiedys} even the integrability assumption was cleared out through the change of measure technique. In the last-mentioned paper also another independence property and a related characterization concerning Kummer and Gamma distributions were considered. The property was formulated by Hamza and Vallois in \cite{HV16} and we will call it HV property in the sequel.  It says that if  $X\sim\K(a, b-a, c)$ (which means that $X$ has Kummer distribution with parameters $a$, $b-a$ and $c$) and $Y\sim\G(b,c)$, $a,b,c>0$ are independent random variables  and  if
\bel{zero}
T_0(x,y)=\left(y/(1+x) ,\;x\,\left(1 +y/(1+x)\right)\right),
\ee 
then the
random vector $(U,V) = T_0(X,Y)$ has independent components, $U\sim\K(b,a -b, c)$ and $V\sim\G(a,c)$.  Note that this is not a MY type property: there is no function $f$ such that $U = f(X+Y)$ and $V=f(X) - f(X+Y)$. Further, in \cite{PW16} the converse was proved:
\begin{theorem}
Let $X$ and $Y$ be two independent positive random variables with positive densities on $(0,\infty)$ such that its logarithms are locally integrable. Let $(U, V)= T_0(X,Y)$. Suppose that $U$ and $V$ are independent. Then there exist constants $a,b,c>0$, such that $ X\sim \mathcal{K}(a,b-a,c)$, $ Y\sim \mathcal{G}(b, c)$ or, equivalently, $U\sim \mathcal{K}(b, a-b, c)$ and $V\sim\mathcal{G}(a,c)$.
\label{twPW16}
\end{theorem}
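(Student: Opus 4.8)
The plan is to translate the independence of $U$ and $V$ into a functional equation for logarithms of densities, to upgrade its solutions to smooth functions, and then to solve an ordinary differential equation. First I would invert the map: from $(u,v)=T_0(x,y)$ one reads off $x=v/(1+u)$ and $y=u(1+u+v)/(1+u)$, so $T_0$ is a $C^\infty$-diffeomorphism of $(0,\infty)^2$ onto itself with Jacobian $(1+u+v)/(1+u)^2$. Hence $(U,V)$ has density $h(u,v)=f_X\!\left(\tfrac{v}{1+u}\right)f_Y\!\left(\tfrac{u(1+u+v)}{1+u}\right)(1+u+v)(1+u)^{-2}$, and $h_U,h_V$ are positive with locally integrable logarithms (a short argument using that $T_0^{-1}$ maps compacta to compacta, so that $\log h\in L^1_{\mathrm{loc}}$ and then $\log h=\log h_U+\log h_V$). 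Taking logarithms in $h=h_Uh_V$ and substituting $v=x(1+u)$, for which $1+u+v=(1+x)(1+u)$ and $u(1+u+v)/(1+u)=u(1+x)$, one obtains for a.e.\ $(x,u)\in(0,\infty)^2$
\begin{equation}\label{eq:FEstarPW}
\alpha(x)+\beta(u(1+x))=\mu(u)+\nu(x(1+u)),
\end{equation}
where $\alpha=\log f_X+\log(1+\cdot)$, $\beta=\log f_Y$, $\mu=\log h_U+\log(1+\cdot)$ and $\nu=\log h_V$, all locally integrable on $(0,\infty)$.

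The second and main step is a regularity bootstrap. Fixing $x$ and integrating \eqref{eq:FEstarPW} in $u$ over a compact interval, then changing variables $u\mapsto u(1+x)$ resp.\ $u\mapsto x(1+u)$ in the two integrals on the right, one represents $\alpha(x)$ as a combination of indefinite integrals of $\beta,\mu,\nu$ over intervals whose endpoints depend smoothly on $x$, divided by smooth non-vanishing factors; so $\alpha$ has a continuous version, and symmetrically (isolating each unknown by a suitable integration) so do $\beta,\mu,\nu$. Feeding continuity back into the same representations yields $C^1$, then $C^2$, and inductively $C^\infty$ regularity, so that \eqref{eq:FEstarPW} holds for every $(x,u)$ with smooth data. (Alternatively one may quote J\'arai's regularity theory.) Differentiating \eqref{eq:FEstarPW} in $x$ and in $u$ gives a linear system for $\beta'(u(1+x))$ and $\nu'(x(1+u))$ of determinant $1+x+u\neq0$, whence
\begin{equation}\label{eq:FEstarstarPW}
(1+x+u)\,\beta'(u(1+x))=x\alpha'(x)+(1+u)\mu'(u),\qquad (1+x+u)\,\nu'(x(1+u))=(1+x)\alpha'(x)+u\mu'(u).
\end{equation}

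Now fix $w=u(1+x)$ in the first identity of \eqref{eq:FEstarstarPW}: with $p=1+x$ and $u=w/p$ it reads $(p^2+w)\beta'(w)=p\,\widetilde\rho(p)+p\,\sigma(w/p)$ where $\widetilde\rho(p)=(p-1)\alpha'(p-1)$ and $\sigma(u)=(1+u)\mu'(u)$. Differentiating in $w$ at fixed $p$ gives $\sigma'(w/p)=\beta'(w)+(p^2+w)\beta''(w)$; substituting $w=pz$ and differentiating in $p$ at fixed $z$ collapses to $(p+z)\bigl(2\beta''(pz)+pz\,\beta'''(pz)\bigr)=0$, i.e.\ $y\beta'''(y)+2\beta''(y)=0$ for all $y>0$, whence $\beta''(y)=Ky^{-2}$ and $\beta(y)=-K\log y+Ly+M$; thus $Y$ has the $\G(b,c)$ density with $b=1-K$, $c=-L$. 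The identical computation applied to the second identity of \eqref{eq:FEstarstarPW} (obtained from the first by $x\leftrightarrow u$, $\alpha\leftrightarrow\mu$, $\beta\to\nu$) gives $\nu(y)=-K'\log y+L'y+M'$, so $V\sim\G(a,c')$. Substituting these forms into \eqref{eq:FEstarPW}, the relation $\alpha(x)-\mu(u)=\nu(x(1+u))-\beta(u(1+x))$ has vanishing mixed second derivative on the left, forcing the coefficient $L'-L$ of the $xu$-term on the right to vanish, i.e.\ $c'=c$; the remaining terms then separate, giving $\alpha(x)=-K'\log x+K\log(1+x)+Lx+\mathrm{const}$ and $\mu(u)=-K\log u+K'\log(1+u)+Lu+\mathrm{const}$. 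Undoing the substitutions $\alpha=\log f_X+\log(1+\cdot)$, $\mu=\log h_U+\log(1+\cdot)$ yields $f_X(x)\propto x^{a-1}(1+x)^{-b}e^{-cx}$ and $h_U(u)\propto u^{b-1}(1+u)^{-a}e^{-cu}$, that is $X\sim\K(a,b-a,c)$, $U\sim\K(b,a-b,c)$, together with $Y\sim\G(b,c)$ and $V\sim\G(a,c)$; integrability of the four densities forces $a,b,c>0$. I expect the main obstacle to be making the bootstrap rigorous — in particular the passage from the a.e.\ identity \eqref{eq:FEstarPW} to one valid everywhere with $C^\infty$ data — after which the ODE and the bookkeeping of constants are routine.
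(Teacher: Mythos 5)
Your proposal is correct and follows essentially the same route as the proof the paper relies on (Theorem 1 of \cite{PW16}, reflected here in Lemma \ref{wym1}): pass from independence to the functional equation for the logarithms of the densities via the Jacobian of the involution $T_0$, use local integrability to bootstrap to smooth solutions, and then reduce to an ODE whose solutions give the Kummer and Gamma log-densities. Your Jacobian, the reduced equation $\alpha(x)+\beta(u(1+x))=\mu(u)+\nu(x(1+u))$, and the subsequent ODE computation all check out.
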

The proof was based on solving an associated functional equation. Completely different methods were used in \cite{PWkiedys}, where a regression version of this characterization was proven.  First, under integrability assumptions the recurrences for moments of $X$ and $Y$ were derived and solved. Then the integrability assumptions were  eliminated through the change of measure technique and so Theorem \ref{twPW16} holds without any assumptions on densities, even their existence. Note also that \cite{PWkiedys} contains many references on Kummer distribution including its origins, motivations and various applications. 

 In this paper we consider the HV property and the related characterization of Kummer and Gamma distributions in the cone of positive definite, symmetric matrices. An analogue of Theorem \ref{twPW16} is proven in Section \ref{sec5}. Before that, in Section \ref{sec2}, we introduce matrix--Kummer and Wishart distributions. Then, in Section \ref{sec3}, {HV property} is adapted to the matrix setting. Section \ref{sec4} is devoted to analysis of related functional equations and some technicalities. We also prove the first main result there, i.e. we solve functional equation \eqref{gl}.
  These results are applied  in Section 5 to prove the main probabilistic result, i. e. the characterization of matrix--Kummer and Wishart distributions. Possible areas of impact and open questions are presented in Section 6.

\section{The matrix Kummer distribution}\label{sec2}

Let $r\geq 1$ be an integer. Denote by $\Omega$ the linear space of real $r\times r$ symmetric matrices endowed with the inner product $\langle x,y \rangle = \tr (xy)$ for any $x, y\in\Omega$. Let  $\Omega_+\subset\Omega$ be the cone of positive-definite symmetric real $r\times r$  matrices. We denote by $\e$ the identity matrix.

For $\Sigma \in \Omega_+$ the Wishart distribution $ \W (b,\Sigma ) $ can  be defined for\break ${b\in \{0, 1/2, 1, 3/2, \ldots, (r-1)/2\}\cup \left((r-1)/2, \infty\right)}$ as the law of a random variable $Y$ valued in the closure of $\Omega_+$ with Laplace transform 

$$ \E\left( e^{\<\sigma, Y\>}\right) = 
\left(\frac{\det\Sigma}{\det(\Sigma - \sigma)}\right)^b, \;\;\mathrm{for}\;\sigma\mathrm{\;such\;that\;} \;\Sigma - \sigma \in \Omega_+.$$

If $b>\frac{r-1}{2}$, then $Y$  
has density of the form:
$$\W (b,\Sigma )(dy) = \frac{(\det \Sigma)^b}{\Gamma_r(b)}(\det y)^{b-(r+1)/2}\exp (-\langle\Sigma , y\rangle )I_{\Omega_+}(y) dy,$$
where $\Gamma_r$ is the multivariate Gamma function (see \cite{M82}) defined for any complex number $z$ with $\Re (z)> (r-1)/2$ by
$$\Gamma_r(z) = \pi^{r(r-1)/4}\prod_{j=1}^r \Gamma\left( z - \frac{j-1}{2}\right).$$
We will define matrix version of Kummer distribution following \cite{Kou12}.
We say that random variable $X$ valued in $\Omega_+$ has \textit{matrix-Kummer} distribution with parameters $a>\frac{r-1}{2}$, $b\in\mathbb{R}$, $\Sigma\in \Omega_+$, denote $X\sim \mk(a,b,\Sigma )$, if it has the following density $$\mk(a,b,\Sigma ) (dx) = C(\det x)^{a-\frac{r+1}{2}}(\det (\e+x))^{-(a+b)} \exp (-\langle\Sigma,x\rangle ) I_{\Omega_+}(x) dx,$$
where the normalizing constant $C$ equals to $\left(\Gamma _r(a)\Psi( a,\frac{r+1}{2}-b;\Sigma)\right)^{-1}$ and $\Psi$ is a confluent hypergeometric function of the second kind with matrix argument (see \cite{JJ85}, formula (2)). In the literature this distribution is sometimes called the \emph{ Kummer-gamma distribution} or the \emph{Kummer distribution of type II} (see e.g. \cite{GN09}, \cite{NC01}). It also appeared recently as a member of the family named weighted-type II Wishart distribution, \cite{ABN17}.

\section{HV property for positive definite matrices}\label{sec3}

In \cite{Kou12} Koudou showed that matrix-Kummer and Wishart distributions have the following property: if $X\sim\mk(a,b,\Sigma)$ and $Y\sim\W(b-a, \Sigma)$ are independent, then $$U = \P\left(\e+(X+Y)^{-1}\right)^{1/2} \left(\e+X^{-1}\right)^{-1} \mathrm{\; and\;} V = X+Y$$ are independent, where $\P(y)$ is endomorphism defined on $\Omega$ and for any $y\in\Omega_+$:
$$ \P(y) (x)  = yxy,\; \; x\in \Omega.$$ This is a generalization of the independence property of real-valued random variables related to transformation \eqref{KV1}. This property is in the family of Matsumoto-Yor type independence properties defined in \cite{KV11, KV12}. Recently, Ko\l odziejek showed that this property characterizes matrix--Kummer and Wishart distributions, \cite{Ko17}. In this section we establish a new independence property of Wishart and matrix-Kummer random matrices, which is  not of Matsumoto-Yor type. A related characterization is given in Section 5.

We want to find transformation that generalizes $T_0$ defined in \eqref{zero} onto $\Omega_+$ and that preserves the independence property for matrix-Kummer and Wishart distributions. 

Let $T:\Omega_+^2\to \Omega_+^2$ be defined as: 
\bel{T_def}T(x,y) =\left( \P\left[(\e+x)^{-\frac{1}{2}}\right] y,\; \P\left[\left(\e+\P\left[(\e+x)^{-\frac{1}{2}}\right] y\right)^{\frac{1}{2}}\right]x \right).\ee

Note that $T$ is involutive (as in one--dimensional case). 

 To derive the Jacobian of transformation $T$, which is done in Proposition \ref{jakob}, we need the fact that
\bel{lemLW} \Det (\P(x)) = (\det x)^{r+1},\ee  where $\Det$ is the determinant in the space of endomorphisms on $\Omega$ (see e.g. \cite{LW00} or \cite{M82}, Theorem 2.1.7).

\begin{proposition}\label{jakob}
Let $u$ and $v$ be in the cone of symmetric positive definite matrices. Denote by $T^{-1}$ the inverse of $T$ defined in \eqref{T_def}. Then the Jacobian of $T^{-1}$ is equal to
  \bel{jacobian}\j_{T^{-1}}(u,v)=(\det[\e+u])^{-(r+1)}\left( \det[\e+v+u]\right)^\frac{r-1}{2}.\ee
Moreover, since $T$ is an involution, the Jacobian of $T$ is equal to $\j_{T^{-1}}$.

\end{proposition}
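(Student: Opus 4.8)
The plan is to exhibit $T^{-1}$ as a composition of elementary maps, each altering only one of the two coordinates by a quadratic representation $\P[\cdot]$ built from the other coordinate, and then to read each factor of the Jacobian off from \eqref{lemLW}. Since $T$ is an involution we have $T^{-1}=T$, and solving the two defining relations of $T$ for the pre-image shows that $T^{-1}(u,v)=(x,y)$ with $x=\P[(\e+u)^{-1/2}]v$ and $y=\P[(\e+x)^{1/2}]u$. As a preliminary I would record that $z\mapsto z^{\pm 1/2}$ and $z\mapsto\e+z$ are smooth self-maps of $\Omega_+$, so $T^{-1}$ is a smooth self-map of $\Omega_+^2$ and its Jacobian is $|\Det D(T^{-1})(u,v)|$.

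I would then factor $T^{-1}=g_3\circ g_2\circ g_1$, where $g_1(u,v)=(u,\ \P[(\e+u)^{-1/2}]v)$, $g_2(u,x)=(\P[(\e+x)^{1/2}]u,\ x)$ and $g_3$ is the coordinate swap. The differential of $g_1$ is block triangular: the first coordinate is untouched, the partial of the second coordinate in $v$ (with $u$ frozen) is the linear map $\P[(\e+u)^{-1/2}]$, and the derivative in $u$ of $\P[(\e+u)^{-1/2}]v$ lands in an off-diagonal block and so does not enter the determinant; the situation for $g_2$ is identical with the roles of the two coordinates interchanged. Hence, by \eqref{lemLW},
\[
\j_{g_1}(u,v)=\Det(\P[(\e+u)^{-1/2}])=(\det(\e+u))^{-(r+1)/2},\qquad \j_{g_2}(u,x)=\Det(\P[(\e+x)^{1/2}])=(\det(\e+x))^{(r+1)/2},
\]
and $\j_{g_3}\equiv 1$, so $\j_{T^{-1}}(u,v)=(\det(\e+u))^{-(r+1)/2}(\det(\e+x))^{(r+1)/2}$ with $x=\P[(\e+u)^{-1/2}]v$.

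It remains to eliminate $x$ via the identity
\[
\e+x=(\e+u)^{-1/2}[(\e+u)+v](\e+u)^{-1/2}=\P[(\e+u)^{-1/2}](\e+u+v),
\]
which gives $\det(\e+x)=\det(\e+u+v)/\det(\e+u)$. Substituting this into the product collapses the exponents and yields \eqref{jacobian}; the assertion for $\j_T$ follows immediately from $T=T^{-1}$. As a consistency check one can instead compute $\j_T(x,y)$ directly from the analogous factorization $h_1:(x,y)\mapsto(x,\P[(\e+x)^{-1/2}]y)$, $h_2:(x,u)\mapsto(\P[(\e+u)^{1/2}]x,u)$, swap, together with the companion identity $\e+\P[(\e+x)^{-1/2}]y=\P[(\e+x)^{-1/2}](\e+x+y)$, and recover the same function of the transformed variables.

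The only genuine obstacle I foresee is the block-matrix bookkeeping. One must make sure that in each $g_i$ the coordinate being modified is, for a fixed value of the parameter coordinate, a linear function of a single input whose differential is exactly the corresponding $\P[\cdot]$, so that the derivative of that $\P[\cdot]$-coefficient --- which involves the derivative of the matrix square root, an unpleasant object --- sits in an off-diagonal block and is annihilated by triangularity, and that the swap $g_3$ contributes only a sign. Once that is in place, everything reduces to a mechanical application of \eqref{lemLW} and the determinant identity above.
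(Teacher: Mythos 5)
Your proof is correct and follows essentially the same route as the paper's: factor $T^{-1}$ into two maps, each acting on a single coordinate through a quadratic representation, apply block-triangularity together with \eqref{lemLW}, and eliminate $x$ via $\det(\e+x)=\det(\e+u+v)/\det(\e+u)$ (the paper merely folds your swap $g_3$ into its $T_1^{-1}$ and $T_2^{-1}$, which is immaterial). Note only that your (correct) computation produces the exponent $(r+1)/2$ on $\det(\e+u+v)$, which agrees with the conclusion of the paper's own proof and with the density computation in Theorem \ref{tw1}, but not with the exponent $(r-1)/2$ displayed in \eqref{jacobian} --- that is a typo in the statement, not a gap in your argument.
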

The proof of Proposition \ref{jakob} is standard. The same technique was, for instance, used in \cite{MW06} for the MY property and in \cite{Kou12} for the other independence property of Wishart and Kummer matrices.
\begin{proof}
Let $x$ and $y$ be in $\Omega_+$. Then
\begin{equation}
u:= \P([\e+x]^{-\frac{1}{2}}) y\in\Omega_+,  \qquad v:=\P([\e+u]^{\frac{1}{2}})x\in\Omega_+. \label{u_v_def}
\end{equation}
 Let $T_1, T_2:\Omega_+^2\to \Omega_+^2$
by defined by
\begin{equation*}
T_1(x,y) = (w,z):= \left( x, \P([\e+x]^{-\frac{1}{2}})y\right)
\end{equation*}
and
\begin{equation*}
T_2(w,z)=(u,v):=\left( z, \P([\e+z]^\frac{1}{2}w)\right).
\end{equation*}
Then $T=T_2\circ T_1$ and we have
\bel{t1}
(x,y) = T_1^{-1}(w,z) = \left(w, \P([\e+w]^\frac{1}{2})z \right),\ee
\bel{t2}
(w,z) = T_2^{-1}(u,v) = \left(\P([\e+u]^{-\frac{1}{2}})v,u\right).
\ee

Let us note that the Jacobian $J_2$ of $T_2^{-1}$ 
equals

$$\Det \left(\begin{array}{cc}
* & E\\
\e & 0
\end{array}\right),$$

\noindent where $*$ does not need to be computed and $E$ is the differential of the function $v\mapsto [e+u]^{-\frac{1}{2}}v[e+u]^{-\frac{1}{2}}$ 

($u$ is fixed) and equals $\mathbb{P}([\e+u]^{-\frac{1}{2}})$. Hence, by \eqref{lemLW} we get
$$J_2(u,v)= \Det E = (\det[\e + u]^{-\frac{1}{2}})^{r+1} = \left( \det[\e+u]\right)^{-\frac{r+1}{2}}.$$
\noindent
The Jacobian $J_1$ of $T_1^{-1}$
can be computed in the same way: 
$$J_1(w,z) =\Det  \left(\begin{array}{cc}
\e & 0\\
* & F
\end{array}\right),
$$
where $F$ is the differential of the mapping $z\mapsto \P([\e+w]^\frac{1}{2})z$. Then
\begin{equation*}
J_1(w,z) = \Det F = \left(\det [\e+w]\right)^\frac{r+1}{2} = \det ( \e+u)^{-1} \det(\e + u + v) ,
\end{equation*}

where the last equality follows from definition of $w$ given in $\eqref{t2}$ and elementary properties of determinant. 

Finally, we obtain
\begin{equation*}J(u,v) = J_1(w,z) J_2(u,v) = \left(\det(\e + u + v) \right)^\frac{r+1}{2} \left( \det[\e+u]\right)^{-(r+1)}.
\end{equation*}
\end{proof}

\begin{theorem}\label{tw1}
 Let $X$ and $Y$ be two independent random matrices valued in $\Omega_+$. Assume that $X$ has matrix--Kummer distribution $\mk(a,b,c\e)$ and $Y$ the Wishart distribution $\W (a+b, c\e)$, where $a>\frac{r-1}{2}$, $b>\frac{r-1}{2}-a$, $c>0$. 
 
Then the random matrices $$U:=\P([\e+X]^{-\frac{1}{2}}) Y,\qquad V:=\P([\e+U]^{\frac{1}{2}})X$$
are independent. Furthermore, $U\sim \mk(a+b, -b, c\e)$ and $V\sim \W(a,c\e)$. 

\end{theorem}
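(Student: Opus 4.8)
The plan is to prove Theorem~\ref{tw1} by a single change of variables in densities: write down the joint density of $(X,Y)$, push it forward through the involution $T$ of \eqref{T_def}, and recognize the result as the product of the asserted $\mk(a+b,-b,c\e)$ and $\W(a,c\e)$ densities. First I would record that the hypotheses $a>\frac{r-1}{2}$ and $b>\frac{r-1}{2}-a$ give $a>\frac{r-1}{2}$ and $a+b>\frac{r-1}{2}$, so both $X\sim\mk(a,b,c\e)$ and $Y\sim\W(a+b,c\e)$ have densities on $\Omega_+$ and, by independence,
\[
f_{X,Y}(x,y)=C\,(\det x)^{a-\frac{r+1}{2}}(\det(\e+x))^{-(a+b)}(\det y)^{(a+b)-\frac{r+1}{2}}e^{-c(\tr x+\tr y)}I_{\Omega_+}(x)I_{\Omega_+}(y),
\]
where $C$ is an explicit constant built from $\Gamma_r$, $\Psi$ and a power of $c$ whose value will not matter. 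Since $T$ is a bijection of $\Omega_+^2$ onto itself and $T=T^{-1}$, the change of variables formula together with Proposition~\ref{jakob} gives $f_{U,V}(u,v)=f_{X,Y}(T(u,v))\,\j_{T^{-1}}(u,v)$, where the relevant Jacobian is $\j_{T^{-1}}(u,v)=(\det(\e+u))^{-(r+1)}(\det(\e+u+v))^{\frac{r+1}{2}}$, i.e. the expression obtained at the end of the proof of Proposition~\ref{jakob}.

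Next I would make $(x,y)=T(u,v)$ fully explicit by composing the inverses \eqref{t1}--\eqref{t2} of $T_1$ and $T_2$, which yields $x=\P([\e+u]^{-\frac12})v$ and $y=\P([\e+x]^{\frac12})u$. The computation then hinges on four elementary identities: $\det x=(\det(\e+u))^{-1}\det v$; the matrix identity $\e+x=\P([\e+u]^{-\frac12})(\e+u+v)$, whence $\det(\e+x)=(\det(\e+u))^{-1}\det(\e+u+v)$; $\det y=\det(\e+x)\det u$; and $\tr x+\tr y=\tr u+\tr v$. The determinant identities follow from \eqref{lemLW} in the form $\det(ghg)=(\det g)^2\det h$ together with $\e=\P([\e+u]^{-\frac12})(\e+u)$, while the trace identity follows by writing $\tr v=\tr(\P([\e+u]^{\frac12})x)=\tr((\e+u)x)=\tr x+\tr(xu)$ and $\tr y=\tr((\e+x)u)=\tr u+\tr(xu)$ and comparing.

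Finally I would substitute these identities and $\j_{T^{-1}}$ into $f_{X,Y}(T(u,v))\,\j_{T^{-1}}(u,v)$ and collect exponents: the powers of $\det(\e+u+v)$ coming from $(\det(\e+x))^{-(a+b)}$, from $(\det y)^{(a+b)-\frac{r+1}{2}}$ and from the Jacobian add up to $-(a+b)+\bigl((a+b)-\tfrac{r+1}{2}\bigr)+\tfrac{r+1}{2}=0$, the powers of $\det(\e+u)$ add up to $-a$, and one is left with
\[
f_{U,V}(u,v)\ \propto\ (\det u)^{(a+b)-\frac{r+1}{2}}(\det(\e+u))^{-a}e^{-c\tr u}\cdot(\det v)^{a-\frac{r+1}{2}}e^{-c\tr v}\,I_{\Omega_+}(u)I_{\Omega_+}(v),
\]
which is precisely a constant multiple of the product of the $\mk(a+b,-b,c\e)$ density (note $(\det(\e+u))^{-a}=(\det(\e+u))^{-((a+b)+(-b))}$) and the $\W(a,c\e)$ density. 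Hence $U$ and $V$ are independent with the stated marginals, and the multiplicative constant is automatically the correct one because $f_{U,V}$ integrates to $1$.

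The routine but error-prone part is exactly this bookkeeping of the $\det(\e+u)$- and $\det(\e+u+v)$-exponents contributed by the three determinant factors of $f_{X,Y}$ and by the Jacobian; the only genuinely delicate point is the cancellation of the $\det(\e+u+v)$ terms, which forces one to use the Jacobian's $\det(\e+u+v)$-exponent $\frac{r+1}{2}$ as produced in the display concluding the proof of Proposition~\ref{jakob}. Everything else — the explicit form of $T(u,v)$, the four identities, and the final recognition of the factorized density — is straightforward.
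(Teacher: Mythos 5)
Your proposal is correct and is essentially identical to the paper's own proof: the paper likewise computes $f_{(U,V)}(u,v)=|J(u,v)|f_X(x)f_Y(y)$ with $(x,y)=T(u,v)$, uses the same four determinant and trace identities, and arrives at the same factorized density $(\det u)^{a+b-\frac{r+1}{2}}(\det(\e+u))^{-a}e^{-c\tr u}\,(\det v)^{a-\frac{r+1}{2}}e^{-c\tr v}$. Your exponent bookkeeping (the cancellation of $\det(\e+u+v)$ and the net $-a$ power of $\det(\e+u)$) matches the paper's computation exactly.
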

\begin{proof}

Denote densities of $X$, $Y$ and $(U, V)$ by $f_X$, $f_Y$ and $f_{(U,V)}$, respectively. Since $X$ and $Y$ are independent, we have 
$$f_{(U,V)}(u,v) = |J(u,v)|f_X(x)f_Y(y) I_{\Omega_+}(u)I_{\Omega_+}(v),$$
where $(x,y) = T^{-1}(u,v) = \left(\P([\e+u]^{-\frac{1}{2}})v,  \P([\e+x]^\frac{1}{2})u\right)$ and $J$ is the Jacobian of $T^{-1}$ from Proposition \ref{jakob}. Elementary properties of trace and determinant give
\begin{eqnarray*}
\det(\e+x) &=& \det\left[ \P([\e+u]^{-\frac{1}{2}})((e+u)+v)\right]=\det(\e+u)^{-1}\det(\e+u+v),\\\label{dety}
\det(y) &=& \det\left[\P([\e+x]^\frac{1}{2})u\right] = \det (\e+x) \det u,\\\label{detx}
\det x &=& \det v \det(\e+u)^{-1},\\
\langle c\e, y\rangle &=&c\langle \e, \P(\e+x)^{1/2}u \rangle=c\langle \e, (\e+x)u \rangle,\\
\langle c\e, x+y) &=& c\langle \e, (\e+u)x+ u\rangle=c \langle \e, u\rangle +c\langle \e, \P(\e+u)^{1/2}x \rangle = \langle c\e, u\rangle +\langle c\e, v\rangle.
\end{eqnarray*}

Hence we have
\begin{eqnarray}\label{stalaC}\nonumber
f_{(U,V)}(u,v) &= &C \det(\e+u)^{-(r+1)}\det(\e+u+v)^\frac{r+1}{2}(\det x)^{a-\frac{r+1}{2}}\det(\e+x)^{-b-a}\\&&\cdot  \left(\det y\right)^{a+b-\frac{r+1}{2}}\exp\left(-c\langle \e, x+y\rangle\right)I_{\Omega_+}(u)I_{\Omega_+}(v)\\\label{gest_22}\nonumber
&=&C \det(\e+u)^{-a}\det u ^{a+b-\frac{r+1}{2}}e^{-\<c,u\>}\det v ^{a-\frac{r+1}{2}}e^{-\<c,v\>}I_{\Omega_+}(u)I_{\Omega_+}(v).
\end{eqnarray} 
\end{proof}
\begin{remark}

Constant $C$ in \eqref{stalaC} equals
$$C = \frac{c^{r(b+a)}}{\Gamma_r(a+b)}\frac{1}{\Gamma_r(a)\psi(a, \tfrac{r+1}{2}-b, c\e)}.$$
On the other hand, since $f_{(U,V)}$ is the density of $\mk(a+b, -b, c\e)\otimes \W(a,c\e)$, then
$$C = \frac{c^{ra}}{\Gamma_r(a)}\frac{1}{\Gamma_r(a+b)\psi(a+b, \tfrac{r+1}{2}+b, c\e)}.$$
So we obtain 
$$\psi(a+b, \tfrac{r+1}{2}+b, c\e)c^{rb} = \psi(a, \tfrac{r+1}{2}-b, c\e).$$
For $r=1$ it is a well known identity, see formula 13.1.29 in \cite{AS72}.

\end{remark}

Notice that $X$ and $Y$ in Theorem \ref{tw1} have very special scale parameter: the identity matrix multiplied by a positive constant $c$. We will show, in Section \ref{sec5}, that no other parameter is possible there.
\section{Functional equations}
\label{sec4}
The main result of this section is the general solution of the functional equation
 \bel{cel}A(x) + B(y) = C\left(\P (\e+x)^{-1/2}y\right) + D\left(\P[\e+\P (\e+x)^{-1/2}y]^{1/2}x\right)\ee
where $A,B,C,D: \Omega_+\to  \R$ are continuous functions.  We use techniques first developed in \cite{BW02} to solve equation of the form 
$$a(x) + b(y) = c(\P(y) x) + d(\P(y) (\e-x)),\; y\in\Omega_+, x\in \mathcal{D},$$
where $\mathcal{D} =\{z\in\Omega_+: \e-z\in\Omega_+\}$. This equation
 was concerned to prove characterization of Wishart distribution (valued in $\Omega_+$). In \cite{BW02} authors assumed that densities of considered random variables are strictly positive and twice differentiable. Earlier similar results, but under different assumptions, were obtained by Olkin and Rubin, \cite{OR62}, Casalis and Letac, \cite{CL96}, Letac and Massam, \cite{LM98}. Starting from 2013 methods from \cite{BW02} were improved by Ko\l odziejek, who:
\begin{itemize}
\item generalized Lukacs' Theorem to all non-octonion symmetric cones of rank greater than $2$ and the Lorentz cone assuming only strict positivity and continuity of densities, \cite{Ko13}, \cite{Ko16};
\item generalized independence characterization of Beta distribution to the symmetric cone setting, \cite{Ko16B}. Functional equation, which played a crucial role there was as follows
$$ a(x) + b(g(\e-x) y) = c(y) + d(g(\e-y)x),$$ where 
$x,\;y\in\mathcal{D}$, $a,b,c,d$ are continuous functions and $g$ is a division algorithm; 
\item solved the following equation
$$ a(x) + b(y) = c(x+y) + d\left(x^{-1} -(x+y)^{-1}\right)$$
for continuous $a,b,c,d$ defined on the symmetric cone, \cite{Ko15_s}. As a consequence he got a converse of  Matsumoto--Yor theorem for random variables valued in symmetric cone, i.e. for Wishart and GIG distributions. Earlier results were obtained only for the cone $\Omega_+$ and under stronger assumptions, \cite{LW00, We02a};
\item proved a new characterization of Wishart and matrix--Kummer, \cite{Ko17}.
\end{itemize}

In the proofs in this section we try to adapt the methods developed in papers cited above in order to solve \eqref{cel}. First, we recall Lemma 3.2 from \cite{Ko13}. It is formulated for any symmetric cone, but we will restrict it to our setting, i.e. the cone $\Omega_+$.
\begin{lemma}[Additive Cauchy equation]\label{jensen}
Let $f : \Omega_+ \mapsto\R$ be a measurable
function such that
$f (x) + f (y) = f (x + y)$ for all $(x, y)\in \Omega_+^2$.
Then there exists $c\in \Omega$ such that $f (x) = \<c, x\>$ for any $x\in \Omega_+$.
\end{lemma}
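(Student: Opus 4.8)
The plan is to reduce the functional equation on the cone to the classical scalar Cauchy functional equation and then invoke measurability to force linearity. First I would observe that the additivity hypothesis $f(x)+f(y)=f(x+y)$ for all $x,y\in\Omega_+$ already implies, by an easy induction, that $f(q x)=q\,f(x)$ for all positive rationals $q$ and all $x\in\Omega_+$, and more generally that $f$ restricted to any ray $\{tx:t>0\}$ satisfies the one-dimensional Cauchy equation in $t$. Since $\Omega_+$ is an open convex cone and $f$ is measurable, the restriction $t\mapsto f(tx)$ is a measurable solution of Cauchy's equation on $(0,\infty)$, hence linear: $f(tx)=t\,f(x)$ for all $t>0$. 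This gives positive homogeneity of degree one. (One small technical point: to run the measurable-Cauchy argument on a single ray one needs the restriction to be measurable for almost every ray, which follows from Fubini applied to the measurable function $f$ on the cone, together with continuity/additivity to propagate the conclusion to every ray.)

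Next I would upgrade additivity plus homogeneity to full $\R$-linearity of $f$ on the linear span $\Omega$ of the cone. Every symmetric matrix $x\in\Omega$ can be written as a difference $x=x_+-x_-$ of two elements of $\Omega_+$ (for instance $x_+ = x + \lambda\e$ and $x_- = \lambda\e$ for $\lambda$ large enough that $x+\lambda\e\in\Omega_+$), so one defines $\tilde f(x):=f(x_+)-f(x_-)$ and checks, using additivity on $\Omega_+$, that this is well defined (independent of the decomposition) and additive on all of $\Omega$. Combined with the homogeneity established above, $\tilde f$ is an additive, rationally homogeneous, measurable function on the finite-dimensional real vector space $\Omega$, hence $\R$-linear. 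Since $\Omega$ carries the inner product $\langle x,y\rangle=\tr(xy)$, every linear functional on $\Omega$ is of the form $x\mapsto\langle c,x\rangle$ for a unique $c\in\Omega$, and restricting back to the cone yields $f(x)=\langle c,x\rangle$ for all $x\in\Omega_+$, as claimed.

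The main obstacle is the passage from additivity to linearity using only measurability rather than continuity: the pathological (non-measurable, Hamel-basis) solutions of Cauchy's equation must be excluded, and the cleanest route is to reduce to the classical theorem that a Lebesgue-measurable additive function on $\R^n$ (or on an open convex subset thereof) is automatically linear. Here the slight subtlety is that $f$ is a priori only defined on the cone $\Omega_+$, not on all of $\Omega$, so one must first argue measurability of the ray-restrictions (via Fubini) or, alternatively, extend $f$ to $\Omega$ by the difference construction above and then apply the measurable-Cauchy theorem directly on $\Omega$. Since this lemma is quoted verbatim as Lemma~3.2 of \cite{Ko13}, in the paper itself it suffices to cite that reference; the sketch above indicates the standard argument behind it.
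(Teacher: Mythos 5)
Your proposal is essentially correct, but note that the paper does not prove this lemma at all: it is imported verbatim as Lemma~3.2 of \cite{Ko13}, so there is no in-paper argument to compare against. Of the two routes you sketch, the second (extend $f$ to all of $\Omega$ by $\tilde f(x)=f(x+\lambda\e)-f(\lambda\e)$, check well-definedness and additivity via $x_++y_-=y_++x_-$, observe that $\tilde f$ is measurable because $\Omega$ is a countable union of the sets $\{x:\ x+n\e\in\Omega_+\}$, and then invoke the classical theorem that a Lebesgue-measurable additive function on a finite-dimensional real vector space is linear) is the clean and complete one; the inner product $\langle x,y\rangle=\tr(xy)$ then represents the resulting linear functional as $\langle c,\cdot\rangle$. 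Your first, ray-by-ray route has the measurability subtlety you flag: a measurable function on the cone need not have measurable restrictions to \emph{every} ray, only to almost every one (Fubini), and the propagation to the exceptional rays should be done by additivity (writing a bad $x$ as $x_1+x_2$ with both summands in the full-measure good set), not by continuity, which you do not yet have at that stage. Since the homogeneity step is anyway subsumed by applying the measurable-additive-implies-linear theorem on $\Omega$ directly, I would drop the ray argument and present only the extension argument, or simply cite \cite{Ko13} as the paper does.
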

Next, we give solution of slightly modified logarithmic Pexider equation for functions defined on $\Omega_+ +\e := \{x\in \Omega_+:\; x-\e\in\Omega_+\}$.
\begin{proposition}\label{fog}
Let $f_1, f_2, f_3:\Omega_+ +\e\to \R$ be continuous functions such that 
\bel{fy}f_1(x) + f_2(y) = f_3(\P(x^{1/2})y) \; \mathrm{for\; all}\;\; x, y \in \Omega_+ +\e.\ee
Then there exist constants $q,$ 
$\gamma_1$, $\gamma_2\in\R$ such that for $ x\in \Omega_+ + \e$
\bel{rozne}\begin{array}{ccl}
f_1(x) &=& f_0(x)  + \gamma_1,\\
f_2(x) &=&f_0(x)+ \gamma_2,\\
f_3(x) &=&f_0(x) + \gamma_1 + \gamma_2,
\end{array}
\ee
where $f_0(x) = q\log\det x $.
\end{proposition}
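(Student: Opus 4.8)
The plan is to strip the equation \eqref{fy} of its Pexider structure, reduce it to a single scale-invariant solution on the whole cone, and finish with a spectral argument. First I would fix $x\in\Omega_++\e$ and let $y\downarrow\e$ along $y=(1+\varepsilon)\e$, $\varepsilon\downarrow 0$: then $\P(x^{1/2})y=(1+\varepsilon)x$ tends to the interior point $x\in\Omega_++\e$, so continuity of $f_3$ together with \eqref{fy} forces $f_2((1+\varepsilon)\e)\to f_3(x)-f_1(x)$; since the left-hand limit does not involve $x$, the difference $f_3-f_1$ is a constant $\gamma_2$. The same argument with $x\downarrow\e$ gives $f_3-f_2\equiv\gamma_1$. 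Writing $f:=f_3-\gamma_1-\gamma_2$ (so $f_1=f+\gamma_1$ and $f_2=f+\gamma_2$), equation \eqref{fy} collapses to
\[
f(x)+f(y)=f\bigl(\P(x^{1/2})y\bigr),\qquad x,y\in\Omega_++\e ,
\]
and it remains to prove $f(x)=q\log\det x$ for some $q$.

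Next I would plug in $x=s\e$, $y=t\e$ with $s,t>1$: this turns the displayed equation into the logarithmic Cauchy equation for $\phi(s):=f(s\e)$, whence $\phi(s)=p\log s$ by continuity. Taking $y=s\e$ with general $x$ then gives the scaling rule $f(sx)=f(x)+p\log s$, first for $s>1$ and, on running the identity backwards, for all $s>0$ with $sx\in\Omega_++\e$. Because $\log\det$ is additive for the operation $y\mapsto\P(x^{1/2})y$, the function $g:=f-\tfrac{p}{r}\log\det$ is continuous, scale-invariant on $\Omega_++\e$, and still solves $g(\P(x^{1/2})y)=g(x)+g(y)$. Using scale-invariance I would extend $g$ to all of $\Omega_+$ by setting $g(x):=g(\mu x)$ for any $\mu>0$ with $\mu x\in\Omega_++\e$; the extension is continuous, scale-invariant, satisfies the same equation on $\Omega_+^2$, and $g(\e)=0$. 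The goal is now to show $g\equiv 0$.

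To do that, restrict $g$ to the positive-definite matrices diagonal in a fixed orthonormal basis, where $\P(x^{1/2})y=xy$; composing with the matrix exponential and using continuity (the solution of Cauchy's equation, cf.\ Lemma \ref{jensen}) gives $g(\mathrm{diag}(e^{c_1},\dots,e^{c_r}))=\sum_i\lambda_ic_i$, and scale-invariance forces $\sum_i\lambda_i=0$. Commutativity of addition turns the functional equation into the symmetry $g(\P(x^{1/2})y)=g(\P(y^{1/2})x)$; writing the polar decomposition $P:=y^{1/2}x^{1/2}=k_0|P|$ with $k_0\in SO(r)$, one has $\P(y^{1/2})x=PP^\top=k_0(P^\top P)k_0^\top=k_0\,\P(x^{1/2})y\,k_0^\top$, hence
\[
g\bigl(k_0\,N\,k_0^\top\bigr)=g(N),\qquad N=\P(x^{1/2})y,\ \ k_0=\text{orthogonal part of }y^{1/2}x^{1/2}.
\]
As $x,y$ range over $\Omega_+$, the pair $(k_0,|P|)$ runs over all $(k,S)\in SO(r)\times\Omega_+$ for which $kS$ is diagonalizable with positive spectrum; a short $2\times2$ discriminant computation, carried out in a coordinate plane, shows that $k$ may then be any rotation by a sufficiently small angle, the angle bound depending only on the spectrum of $N$. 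Iterating such rotations — which preserve the spectrum — upgrades this to $g(kNk^\top)=g(N)$ for all $k\in SO(r)$ and all $N\succ 0$, so $g$ is a symmetric function of the eigenvalues. This forces the linear functional $\sum_i\lambda_ic_i$ to be permutation-symmetric, so $\lambda_1=\dots=\lambda_r$, and together with $\sum_i\lambda_i=0$ we get $\lambda=0$ and $g\equiv 0$ on diagonal matrices, hence everywhere. Thus $f=\tfrac{p}{r}\log\det$, and with $q:=p/r$ the functions $f_1=f+\gamma_1$, $f_2=f+\gamma_2$, $f_3=f+\gamma_1+\gamma_2$ are precisely those in \eqref{rozne}.

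The first two reductions are routine, the only delicate point being to keep every substitution inside the domain $\Omega_++\e$ (which is exactly why the scalar and scaling steps are carried out first). The real difficulty — and the place where the argument genuinely departs from the scalar case, non-commutativity now mattering — is proving the $SO(r)$-conjugation invariance of $g$: that the orthogonal parts of the products $y^{1/2}x^{1/2}$ sweep out a neighbourhood of the identity in $SO(r)$, combined with the spectrum-preserving iteration to reach all of $SO(r)$. I expect this step to be the main obstacle.
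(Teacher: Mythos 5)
Your reduction to the single equation $f(x)+f(y)=f(\P(x^{1/2})y)$ on $\Omega_++\e$ coincides with the paper's first step, but from there the routes genuinely diverge. The paper extends $f$ to all of $\Omega_+$ by the normalization $\bar f(x)=f(t_xx)-f(t_x\e)$, verifies by case analysis that the extension still satisfies the equation, and then invokes Lemma 4.2 of \cite{Ko15} (the logarithmic Pexider equation on $\Omega_+$) as a black box. You instead peel off the radial part first ($f(s\e)=p\log s$, the scaling rule, $g:=f-\tfrac{p}{r}\log\det$), extend by scale invariance --- essentially the same extension in disguise, since $f(t_x\e)=p\log t_x$ --- and then prove the cone Pexider lemma from scratch via the symmetry $g(P^\top P)=g(PP^\top)$ for $P=y^{1/2}x^{1/2}$, together with the classical fact that an invertible matrix factors as a product of two positive definite symmetric matrices if and only if it is diagonalizable with positive spectrum. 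This buys a self-contained proof at the price of the eigenvalue-perturbation analysis; the core of it is sound: for $N$ diagonal and $k$ a rotation in the $(i,j)$ coordinate plane, $kN^{1/2}$ really is a $2\times2$ block plus a diagonal part, so the discriminant criterion $\cos^2\theta>4\sqrt{\lambda_i\lambda_j}\,(\sqrt{\lambda_i}+\sqrt{\lambda_j})^{-2}$ is exact, and products of small coordinate-plane rotations generate $SO(r)$.

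Two points need attention before this is complete. First, your angle bound ``depending only on the spectrum of $N$'' degenerates to zero when $N$ has a repeated eigenvalue: for $N=\lambda\e$ the matrix $kN^{1/2}$ has eigenvalues $\sqrt{\lambda}\,e^{\pm i\theta}$, so no nontrivial rotation is admissible and the argument as written yields nothing at such $N$. This is easily repaired --- matrices with simple spectrum are dense in $\Omega_+$ and $g$ is continuous, so conjugation invariance established on that dense set passes to the closure --- but the repair must be stated, since the iteration ``which preserves the spectrum'' silently assumes the bound is positive. Second, an expository gap in the iteration: after conjugating by $k_1$ the eigenbasis of $k_1Nk_1^\top$ has rotated, so the coordinate planes for the next step live in the new eigenbasis; unwinding this shows the reachable set of conjugators is the set of products of small coordinate-plane rotations taken in the \emph{original} eigenbasis of $N$, which does generate $SO(r)$, but this deserves a sentence. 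With these two patches the proposal is a correct and more elementary alternative to citing \cite{Ko15}.
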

\begin{proof}
Let $x = \alpha \e$, $\alpha>1$ and $\alpha\to 1^+$. Given Eq. \eqref{fy}, we have $$f_2(y) = f_3(y) - \lim_{\alpha\to 1^+} f_1(\alpha\e) = f_3(y) -\gamma_1.$$ Similarly we obtain $$f_1(x) = f_3(x) - \lim_{\alpha\to 1^+} f_1(\alpha\e) = f_3(x) - \gamma_2.$$ So Eq. \eqref{fy} is equivalent to 
\bel{ft}f(x) + f(y) = f(\P(x^{1/2})y) \; \mathrm{for\; all}\;\; x,y \in \Omega_+ +\e,\ee
where 
$f(x) = f_3(x) - \gamma_1 - \gamma_2$. 

Following the proof of Lemma 3.2 in \cite{Ko13}, we define an extension $\bar f$ of $f$ for all $x\in \Omega_+$: 
\bel{rozsz}\bar f(x) = \left\{\begin{array}{cl}
f(x), & x\in \Omega_+ + \e\\
f(t_x x) - f(t_x\e), &  x\notin \Omega_+ + \e
\end{array}\right. ,\ee
where $t_x = \tfrac{2}{\min_i \lambda_i}$, $\lambda_i$ being the $i$th eigenvalue of $x$. Also $t_{\P(x^{1/2})y}$ will be denoted by $t_{xy}$. Note that all eigenvalues of matrix $t_x x$ are greater than 1 for any $x\in \Omega_+$, so $t_x x\in \Omega_+ +\e$. Now, we will show that 
\bel{fbar}
\bar f(x) + \bar f(y) = \bar f\left(\P\left(x^{1/2}\right)y\right) \; \mathrm{for\; all}\;\; x,y \in \Omega_+.
\ee
\paragraph*{\bf Case 1:} $x \in \Omega_+ +\e$, $y \notin \Omega_++\e$ and $\P\left(x^{1/2}\right)y\in \Omega_++\e$. Then, by definition \eqref{rozsz} and Eq. \eqref{ft}
$$\bar f(x) +\bar f(y) = f(x) +f(t_y y) -f(t_y\e) =f\left(t_y \P\left(x^{1/2}\right) y\right) -f(t_y\e) = \bar f\left(\P\left(x^{1/2}\right) y\right).$$
\paragraph*{\bf Case 2:}  $x \in \Omega_+ +\e$, $y \notin \Omega_++\e$ and $\P\left(x^{1/2}\right)y\notin \Omega_+ +\e$. These imply that minimal eigenvalue of  $\P\left(x^{1/2}\right)y$ is not greater than 1. Then
\begin{multline*} \bar f(x) + \bar f (y) = f(x) + f(t_yy) - f(t_y\e) = \\ =f\left(\frac{t_y}{t_{xy}}\P\left(x^{1/2}\right)yt_{xy}\right) - f(t_y\e) = f\left( \P\left(x^{1/2}\right)y t_{xy}\right)-\left[f(t_y \e) - f\left(\frac{t_y}{t_{xy}}\e\right)\right] =\\= f\left( \P\left(x^{1/2}\right)y t_{xy}\right) - f(t_{xy}\e)=\bar f\left( \P\left(x^{1/2}\right) y\right).
\end{multline*}
Here, besides \eqref{rozsz} and \eqref{ft}, we have used the fact that every eigenvalue of $\P\left(x^{1/2}\right)y$ is not less than the product of the smallest eigenvalues of $x$ and $y$. Indeed, when $\lambda_1$ is the smallest eigenvalue of $\P\left(x^{1/2}\right)y$, then from the Min--max theorem we have 
\begin{multline*}\lambda_1 = \min_{z\in\R^n\setminus\{0\}} \frac{(x^{1/2}yx^{1/2}z,z)}{(z,z)} = \min_{z\in\R^n\setminus\{0\}} \frac{(yx^{1/2}z,x^{1/2}z)}{(x^{1/2}z,x^{1/2}z)} \frac{(x^{1/2}z, x^{1/2}z)}{(z,z)}\geq\\\geq \lambda_x\lambda_y>\lambda_y,
\end{multline*}
where $\lambda_x$ and $\lambda_y$ are the smallest eigenvalues of $x$ and $y$, respectively. The last inequality follows from the fact, that $x\in \Omega_++\e$.

Other cases can be easily verified in a similar way.

Since Eq. \eqref{fbar} holds for every $x,y\in \Omega_+$ then by Lemma 4.2 (Logarithmic Pexider Equation) from \cite{Ko15} 
$$\bar f (x) =  q\log\det x \;\; \mathrm{on} \; \Omega_+.$$
From definition $f(x) =\bar f(x)= f_0(x)$ for $x\in \Omega_++\e$ and the proof is complete.
\end{proof}
We will also need two new lemmas.
\begin{lemma}\label{lambda}
Let $c\in\Omega_+$. Assume that $\left\langle c, \P(u) v^2 \right\rangle  = \left\langle c, \P(v) u^2 \right\rangle$ for all $u,v\in \Omega_+$.  Then $c = \lambda \e$ for some $\lambda >0$.
\end{lemma}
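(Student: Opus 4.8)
The plan is to exploit the fact that the stated identity $\langle c, \P(u)v^2\rangle = \langle c, \P(v)u^2\rangle$ must hold for \emph{all} $u,v\in\Omega_+$, so we are free to plug in convenient special matrices and differentiate. A natural first move is to note that both sides are analytic in $u$ and $v$ (they are polynomial in the entries), so it suffices to work near $u=v=\e$. Writing $u=\e+s h$ and $v=\e+s k$ for fixed $h,k\in\Omega$ and expanding in the scalar $s$, the $s^0$ and $s^1$ terms match trivially (by the symmetry $h\leftrightarrow k$ of the leading behaviour), and the first genuinely informative relation comes from the $s^2$ coefficient. A short computation gives $\P(\e+sh)(\e+sk)^2 = \e + s(2h+2k) + s^2(h^2 + 4hk\;\text{(symmetrized)} + k^2) + O(s^3)$ — more precisely the degree-two part is $h^2 + k^2 + (hk+kh) + (hk+kh) = $ something symmetric plus the genuinely asymmetric piece $2(hk+kh)$ versus... — the point is that after subtracting the manifestly $h\leftrightarrow k$-symmetric terms, the identity forces $\langle c, hk + kh\rangle$ to be symmetric in a way that is automatic, so one must go to order $s^3$ (or choose $h,k$ not both perturbations of $\e$).

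Rather than grind the Taylor expansion, a cleaner route: fix $v=\e$. Then the identity reads $\langle c, \P(u)\e\rangle = \langle c, u^2\rangle$, i.e. $\langle c, u^2\rangle = \langle c, u^2\rangle$, which is vacuous — so $v=\e$ gives nothing, confirming we need a two-parameter perturbation. So instead I would take $u = \e + sh$ with $h\in\Omega$ arbitrary and keep $v$ general; expanding to first order in $s$ yields $2\langle c, \P(h,\cdot)v^2\rangle$-type terms on the left versus $\langle c, \P(v)(h + h)\rangle = 2\langle c, \P(v)h\rangle$ on the right, where $\P(h,\cdot)$ denotes the polarized quadratic representation. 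Concretely the $O(s)$ term gives
\[
\langle c,\, hv^2 + v^2 h\rangle \;=\; 2\langle c,\, vhv\rangle \qquad\text{for all } h\in\Omega,\ v\in\Omega_+.
\]
Using $\langle c, hv^2 + v^2h\rangle = \langle c v^2 + v^2 c,\, h\rangle$ and $\langle c, vhv\rangle = \langle vcv,\, h\rangle$, and since $h$ ranges over all of $\Omega$, we conclude $cv^2 + v^2 c = 2\, vcv$ for every $v\in\Omega_+$, hence (by analyticity) for every symmetric $v$. Equivalently $v^2 c - vcv = vcv - cv^2$, i.e. $v(vc - cv) = (vc-cv)v$, so the commutator $[v,c]$ commutes with $v$ for every symmetric $v$.

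To finish from $cv^2 + v^2 c = 2vcv$: rewrite as $(vc - cv)v = v(vc - cv)$, i.e. $[v,c]$ commutes with $v$. Now take $v$ diagonal with distinct entries $\lambda_1,\dots,\lambda_r$; then the matrices commuting with $v$ are exactly the diagonal ones, so $[v,c]$ is diagonal — but $[v,c]$ has zero diagonal (the $(i,i)$ entry of $v c - c v$ is $\lambda_i c_{ii} - c_{ii}\lambda_i = 0$), hence $[v,c]=0$, forcing $c$ diagonal. Running this for enough choices of ordered diagonal $v$ (or conjugating: the relation is invariant under $v\mapsto ovo^{\top}$, $c\mapsto oco^{\top}$ for orthogonal $o$, so $c$ commutes with \emph{every} symmetric matrix), we get that $c$ commutes with all of $\Omega$, whence $c = \lambda\e$ for some scalar $\lambda$; positivity of $c$ gives $\lambda>0$. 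The main obstacle is purely bookkeeping: making sure the first-order perturbation computation of the polarized quadratic representation is done correctly and that one is allowed to let $h$ range over all symmetric matrices (it is, since $u=\e+sh\in\Omega_+$ for small $s$), and then recognizing the resulting bilinear identity as the statement that $c$ is central.
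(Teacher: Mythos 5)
Your proof is correct, but it reaches the conclusion by a genuinely different route than the paper. You linearize the identity in $u$ around $\e$: writing $u=\e+sh$ and matching the $O(s)$ coefficients yields $\langle c, hv^2+v^2h\rangle = 2\langle c, vhv\rangle$ for all symmetric $h$, which by nondegeneracy of the trace form gives the matrix identity $cv^2+v^2c=2vcv$, i.e.\ $[v,[v,c]]=0$; a spectral argument (diagonal $v$ with distinct eigenvalues, plus orthogonal invariance) then forces $[v,c]=0$ for all $v$, hence $c=\lambda\e$. The paper instead makes the single clever substitution $v=c^{1/2}$, which turns the hypothesis into the scalar identity $\tr(cucu)=\tr(c^2u^2)$; symmetrizing, this is recognized as $\left\langle (uc-cu)^\top, uc-cu\right\rangle=0$, so $uc=cu$ for all $u\in\Omega_+$ directly. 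Both arguments funnel into the same final fact (a symmetric positive definite matrix commuting with all of $\Omega_+$ is a positive multiple of $\e$). The paper's substitution is shorter and avoids differentiation, but requires guessing the right choice of $v$; your perturbative derivation is more mechanical and produces the stronger pointwise matrix identity $cv^2+v^2c=2vcv$ rather than just a trace identity, at the cost of an extra commutator argument. (The exploratory first paragraph of your write-up, including the correctly discarded $v=\e$ attempt, could simply be deleted; the actual proof begins with the first-order expansion in $u$.)
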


\begin{proof}
For $v = c^{1/2}$ the equality $\left\langle c, \P(u) v^2 \right\rangle  = \left\langle c, \P(v) u^2 \right\rangle$  results in 
$$
\begin{array}{rcl}
\left\langle c, \P(u)c \right\rangle  &=& \left\langle c, \P(c^{1/2}) u^2 \right\rangle \\
\left\langle c, \P(u)c \right\rangle  &=& \left\langle c^2,  u^2 \right\rangle \\
\left\langle \e, c\cdot\P(u)c \right\rangle  &=& \left\langle \e, c^2 u^2 \right\rangle \\
0 &=&\left\langle \e, c\cdot\P(u)c- c^2 u^2 \right\rangle .
\end{array}
$$ 
On the other hand the last equality can be written as $$ 0=\left\langle \e, ucuc- u^2 c^2 \right\rangle. $$
Adding last two equalities we arrive at 
$$\begin{array}{rcl}
 0&=&\left\langle \e, ucuc +cucu- c^2u^2- u^2 c^2 \right\rangle \\
 &=&-\left\langle \e, (uc-cu)(uc-cu) \right\rangle \\
 &=&-\left\langle  (uc-cu)^\top,(uc-cu) \right\rangle .
 \end{array}
 $$
 Thus $||uc-cu||=0$ and so  $cu = uc$ for all $u\in \Omega_+$. We  conclude (see, e.g., proof of Proposition 5.2 in \cite{LW00}), that $c = \lambda \e$ for some $\lambda>0$.

\end{proof}
\begin{lemma}\label{o_x_alfa}
Let $u, z\in \Omega_+$, $\alpha>0$ and
\bel{x_def}{x_\alpha = \left[ \P(u +\frac{1}{\alpha}\e)^{-1/2} \tilde{x}\right]^2 - \e},\ee
 where $\tilde{x}=\left(\P(u +\e/\alpha)^{1/2}(z+u+\e/\alpha)\right)^{1/2}$. Then $\displaystyle\lim_{\alpha\to 0} \frac{1}{\alpha}x_\alpha$ =z.
\end{lemma}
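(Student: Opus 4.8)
The plan is to show that $p:=(x_\alpha+\e)^{1/2}$ satisfies a simple matrix quadratic equation, from which the limit can be read off directly. Set $w:=u+\tfrac1\alpha\e\in\Omega_+$, so that $\tilde x=\bigl(\P(w^{1/2})(z+w)\bigr)^{1/2}$ and $x_\alpha=\bigl[\P(w^{-1/2})\tilde x\bigr]^2-\e$. Since conjugating a positive definite matrix by a symmetric invertible matrix keeps it positive definite, $\P(w^{-1/2})\tilde x=w^{-1/2}\tilde x\,w^{-1/2}$ is positive definite, and therefore it must equal $(x_\alpha+\e)^{1/2}=p$. From $p=w^{-1/2}\tilde x\,w^{-1/2}$ one gets $\tilde x=w^{1/2}p\,w^{1/2}$, hence $\tilde x^2=w^{1/2}(pwp)w^{1/2}$; comparing with $\tilde x^2=\P(w^{1/2})(z+w)=w^{1/2}(z+w)w^{1/2}$ and cancelling the invertible factor $w^{1/2}$ on both sides yields the key identity
\[ pwp=w+z. \]

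The second step is to rescale. With $\beta:=1/\alpha$, put $a:=\tfrac1\beta w=\e+\beta^{-1}u$ and $b:=\tfrac1\beta(w+z)=\e+\beta^{-1}(u+z)$; dividing the identity by $\beta$ gives $pap=b$. For $A,B\in\Omega_+$ the equation $XAX=B$ has a unique positive definite solution, namely $X=A^{-1/2}\bigl(A^{1/2}BA^{1/2}\bigr)^{1/2}A^{-1/2}$ (a direct verification, using uniqueness of positive definite square roots). Hence $p=a^{-1/2}\bigl(a^{1/2}ba^{1/2}\bigr)^{1/2}a^{-1/2}$ is a continuous function of $(a,b)\in\Omega_+\times\Omega_+$ which equals $\e$ at $(a,b)=(\e,\e)$. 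Since $a\to\e$ and $b\to\e$ as $\alpha\to0$, it follows that $p\to\e$.

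For the last step, I expand $pwp=w+z$ with $w=u+\tfrac1\alpha\e$ to get $\tfrac1\alpha p^2+pup=\tfrac1\alpha\e+u+z$, i.e.
\[ \tfrac1\alpha x_\alpha=\tfrac1\alpha(p^2-\e)=u+z-pup. \]
Letting $\alpha\to0$ and using $p\to\e$, and hence $pup\to u$, gives $\tfrac1\alpha x_\alpha\to u+z-u=z$, which is the assertion.

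The argument is short once the identity $pwp=w+z$ is established, so the only real care is required (i) in that first step, where one must track the order of the non-commuting factors inside $\P$, and (ii) in showing $p\to\e$: writing $p$ as the explicit continuous function of $(a,b)$ above makes this immediate, whereas a direct spectral argument (sandwiching $w\le pwp\le(1+\|z\|\alpha)w$ and using singular values) would be more cumbersome.
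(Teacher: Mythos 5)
Your proof is correct, and it takes a genuinely different route from the paper's. The paper computes $\tfrac{1}{\alpha}x_\alpha$ head-on: it expands $\bigl[\P(u+\tfrac{1}{\alpha}\e)^{-1/2}\tilde x\bigr]^2$ as $\P(u+\tfrac{1}{\alpha}\e)^{-1/2}\P(\tilde x)(u+\tfrac{1}{\alpha}\e)^{-1}$, inserts the resolvent identity $(\alpha u+\e)^{-1}=\e-\alpha(\alpha\e+u^{-1})^{-1}$, and evaluates the limit of each resulting term separately (showing $\tfrac{1}{\alpha}\bigl((\alpha\tilde x)^2-\e\bigr)\to 2u+z$ and $\alpha\tilde x\to\e$ along the way). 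You instead extract the algebraic relation that $x_\alpha$ was built to satisfy, namely $pwp=w+z$ with $p=(\e+x_\alpha)^{1/2}$ and $w=u+\tfrac{1}{\alpha}\e$ --- this is exactly the identity $\P(\e+x_\alpha)^{1/2}(u+\e/\alpha)=z+u+\e/\alpha$ that the paper later uses in Step~3 of the proof of Proposition~\ref{rown} --- then establish $p\to\e$ via the explicit continuous solution $X=A^{-1/2}(A^{1/2}BA^{1/2})^{1/2}A^{-1/2}$ of $XAX=B$, and read off the limit from $\tfrac{1}{\alpha}x_\alpha=u+z-pup$. All steps check out: the identification $p=w^{-1/2}\tilde x w^{-1/2}$ follows from uniqueness of positive definite square roots, the cancellation of $w^{1/2}$ is legitimate, and the rescaling to $pap=b$ with $a,b\to\e$ is clean. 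Your argument is shorter and makes the purpose of the definition of $x_\alpha$ transparent; the paper's computation is more pedestrian but avoids invoking uniqueness of the positive definite solution of the quadratic matrix equation (a standard fact, and one you verify in passing anyway).
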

\begin{proof}
We have
\bel{lem}
\begin{array}{ccl}
\frac{x_\alpha}{\alpha}&=&\frac{1}{\alpha}\left\{\left[ \P(u +\frac{\e}{\alpha})^{-1/2} \tilde{x}\right]^2 - \e\right\}\\
&=& \frac{1}{\alpha}\left[\P(u +\frac{\e}{\alpha})^{-1/2}\P(\tilde{x}) (u+\frac{\e}{\alpha})^{-1} -\e\right]\\
&=& \P(\alpha u+\e)^{-1/2}\left[\frac{1}{\alpha}\P(\alpha\tilde{x})(\alpha u + \e)^{-1} - u - \frac{\e}{\alpha}\right] .
\end{array}\ee
Note that 
$$(\alpha u + \e)^{-1} = \e - \left( \e +\frac{u^{-1}}{\alpha}\right)^{-1} = \e - \alpha\left( \alpha\e +u^{-1}\right)^{-1}.  $$
Indeed, $$(\alpha u + \e) \left(\e - \left( \e +(\alpha u)^{-1}\right)^{-1}\right) = \alpha u + \e  - \alpha u \left(\e + (\alpha u)^{-1}\right)\left(\e + (\alpha u )^{-1}\right)^{-1} = \e $$
and 
$$ \left(\e - \left( \e +(\alpha u)^{-1}\right)^{-1}\right)(\alpha u + \e) = \e.$$
We continue Eq. \eqref{lem}:
\nobel
\begin{array}{ccl}\nonumber
\frac{x_\alpha}{\alpha}&=&
\P(\alpha u+\e)^{-1/2}\left[ \frac{1}{\alpha}\P(\alpha\tilde{x})(\e - \alpha\left( \alpha\e +u^{-1}\right)^{-1}) - u - \frac{\e}{\alpha}\right]
\\ &=& \P(\alpha u+\e)^{-1/2}\left[ \frac{1}{\alpha}\left(\alpha^2\tilde{x}^2 - \e\right) - u - \P(\alpha \tilde{x})\left(u^{-1} + \alpha \e\right)^{-1}\right].
\end{array}\ee
Recall that $\tilde{x}=\frac{1}{\alpha}\left(\P(\alpha u + \e) (\alpha(z+u) + \e)\right)^{1/2}$. Thus
\nobel\nonumber
\begin{array}{ccl}
\frac{1}{\alpha}\left( (\alpha\tilde{x})^2 - \e\right) 
&=& \frac{1}{\alpha}\left(\P(\alpha u + \e)^{1/2}(\alpha(z+u)) + \alpha u +\e - \e \right)\\
&=&u + \P(\alpha u + \e )^{1/2} (z+u) \to 2u + z,\; \mathrm{when }\; \alpha\to 0.
\end{array}
\ee
Note that the latter calculation also implies that $\alpha\tilde{x} \to \e$ when $\alpha\to 0$.
With these observations we may eventually write that 
$$\frac{1}{\alpha}x_\alpha\to 2u + z - u - u = z\in \Omega_+ .$$
\end{proof}
The following Lemma is a simple corollary of Theorem 1 from \cite{PW16}.
\begin{lemma}\label{wym1}
Let $a$, $b$, $c$ and $d$ be continuous functions on $(0,\infty)$. Suppose that
\bel{1wym}a(x) + b(y) = c(y/(1+x)) + d(x(1+y/(1+x)))\ee
then there exist constants $a,b,c\in\R$ and $c_1+c_2 = c_3+c_4$ such that
\begin{eqnarray*}
a(x)& = &b\log x-cx-a\log(1+x) + c_1 \\
b(x)& = &a\log x - dx + c_2 \\
c(x)& = &a\log x - dx - b\log (1+x) + c_3 \\
d(x)& = &b\log x - cx + c_4
\end{eqnarray*}
\end{lemma}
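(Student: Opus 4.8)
The plan is to deduce the lemma from the probabilistic characterization of \cite{PW16}, i.e.\ from Theorem \ref{twPW16}, through the usual dictionary between separable functional equations of this type and independence statements. A direct computation shows that $T_0$ from \eqref{zero} is a $C^1$ involution of $(0,\infty)^2$ with $\j_{T_0^{-1}}(u,v)=(1+u+v)/(1+u)^2$ (this is the $r=1$ instance of the Jacobian in Proposition \ref{jakob}), and that for $(u,v)=T_0(x,y)$ one has $u+v=x+y$ and $1+x+y=(1+x)(1+u)$. Hence, given a solution $(a,b,c,d)$ of \eqref{1wym}, if on $(0,\infty)$ we set
\[
f_X(x)\propto \frac{e^{a(x)}}{1+x},\qquad f_Y(y)\propto e^{b(y)},\qquad f_U(u)\propto \frac{e^{c(u)}}{1+u},\qquad f_V(v)\propto e^{d(v)},
\]
then the two $(1+\cdot)^{-1}$ weights together with the Jacobian contribute the factor $(1+u+v)/\bigl((1+x)(1+u)\bigr)$, which equals $1$ by the second identity; so \eqref{1wym} says exactly that if $X,Y$ are independent with densities $f_X,f_Y$, then the push-forward of their joint law by $T_0$ is the product of $f_U$ and $f_V$, i.e.\ that the two coordinates of $T_0(X,Y)$ are independent.

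Granting this, one argues as follows. First, a substitution (using the same two identities) verifies that the explicit four-parameter family in the conclusion solves \eqref{1wym} identically; since \eqref{1wym} is linear in $(a,b,c,d)$ and this family is closed under addition, it suffices to prove the conclusion after subtracting from $(a,b,c,d)$ a suitable member of the family, chosen (its two ``logarithmic'' directions plus the ``linear'' direction) so that $x\mapsto e^{a(x)}/(1+x)$ and $y\mapsto e^{b(y)}$ become integrable on $(0,\infty)$. After this reduction $f_X,f_Y$ are genuine densities of independent random variables $X,Y$; they are positive and continuous, hence their logarithms are locally integrable, and by the first paragraph the coordinates $U,V$ of $T_0(X,Y)$ are independent. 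Theorem \ref{twPW16} then yields constants $a,b,c>0$ with $X\sim\K(a,b-a,c)$ and $Y\sim\G(b,c)$, which, translated back, is precisely the asserted form of $a$ and $b$. Finally $c$ and $d$ are recovered by inserting $(x,y)=T_0(u,v)=\bigl(v/(1+u),\,u(1+u+v)/(1+u)\bigr)$ into \eqref{1wym}, substituting the now-known $a,b$, and separating the variables $u$ and $v$ (the resulting expression splits by the two identities above), which fixes $c,d$ up to additive constants with $c_1+c_2=c_3+c_4$.

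The only delicate step is the normalization in the second paragraph: one has to know that a continuous solution of \eqref{1wym} cannot have wilder than logarithmic behaviour at $0$ or than logarithmic-times-exponential growth at $\infty$, so that a finite shift by the explicit family really does make $e^{a}/(1+x)$ and $e^{b}$ integrable. Fixing one variable in \eqref{1wym} and letting the other tend to an endpoint shows that the endpoint behaviours of $a,b,c,d$ agree up to an affine reparametrisation and an additive constant, which is what allows one shift to regularise both $a$ and $b$ simultaneously; a reader preferring to avoid this can instead appeal to the change-of-measure technique of \cite{PWkiedys}, which dispenses with integrability altogether, and if Theorem 1 of \cite{PW16} is itself phrased for continuous functions satisfying \eqref{1wym} then this paragraph is empty and the lemma follows at once.
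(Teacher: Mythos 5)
Your reduction of \eqref{1wym} to an independence statement is set up correctly: the Jacobian of $T_0$ from \eqref{zero} is indeed $(1+u+v)/(1+u)^2$ (this matches the computation inside the proof of Proposition \ref{jakob}; note the displayed formula \eqref{jacobian} carries a typo, $\frac{r-1}{2}$ instead of $\frac{r+1}{2}$), the identities $u+v=x+y$ and $1+x+y=(1+x)(1+u)$ hold, and the weights $e^{a}/(1+x)$, $e^{b}$, $e^{c}/(1+u)$, $e^{d}$ absorb the Jacobian exactly as you say. (One small point: the displayed family solves \eqref{1wym} only when the two linear coefficients coincide, $d=c$; this is what the lemma intends, since it lists only the constants $a,b,c$.) However, the route itself is not the paper's: the paper proves Lemma \ref{wym1} by a one-line citation, because Theorem 1 of \cite{PW16} is the functional-equation result (valid for locally integrable, hence in particular continuous, functions), not the probabilistic characterization reproduced here as Theorem \ref{twPW16}. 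Your closing conditional clause is therefore the paper's entire proof; your main argument instead tries to travel in the opposite direction, from the probabilistic theorem back to the functional equation.

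That direction has a genuine gap at the normalization step, and your own sketch does not close it. To invoke Theorem \ref{twPW16} you must subtract a member of the explicit family so that $e^{a(x)}/(1+x)$ and $e^{b(y)}$ become integrable on $(0,\infty)$. Continuity on $(0,\infty)$ gives no control whatsoever on the behaviour of $a,b,c,d$ at $0^{+}$ or at $\infty$, and the family's only unbounded direction at infinity is linear ($-cx$); so if, a priori, $a(x)$ grew like $x^{2}$, no finite shift by the family could regularize it. Fixing one variable in \eqref{1wym} and sending the other to an endpoint only expresses the unknown behaviour of $a$ at $\infty$ through the equally unknown behaviours of $c$ at $0^{+}$ and of $d$ at $\infty$; it yields no sub-exponential bound, and establishing such a bound is essentially the content of the lemma, so the argument is circular precisely where it needs to do work. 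The proposed escape via the change-of-measure technique of \cite{PWkiedys} does not apply either: that technique modifies an existing probability measure to gain integrability of moments, whereas here, without integrability, there is no probability measure to begin with. The correct and short proof is the one you relegate to your last clause: continuity implies local integrability, and the functional-equation theorem of \cite{PW16} applies verbatim.
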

In next proposition we solve matrix--variate version of Eq. \eqref{1wym}, which is our first main result. The solution will be used in the proof of the probabilistic main result of this paper -- Theorem \ref{tw2}, Section \ref{sec5}.
\begin{proposition}\label{rown}
Let $A$, $B$, $C$, $D:\Omega_+\to\R$ be continuous functions, such that 
\bel{gl}A(u) + B(v) = C\left(\P (\e+u)^{-1/2}v\right) + D\left(\P[\e+\P (\e+u)^{-1/2}v]^{1/2}u\right)\ee for any $u,v\in\Omega_+$. Then there exist constants $a,b, c_1, c_2,d\in\R$ and $ \lambda>0$ such that
\nobel
\begin{array}{rl}
A(x)=&a\log\det x - b\log\det(\e+x)+c_1 + \lambda \tr x\\
B(x)  =& b\log\det x +c_2+d+\lambda \tr x\\
C(x) =& b\log\det x - a\log\det (\e+x) +c_2+\lambda \tr x\\
D(x) = &a\log\det x +c_1+d+\lambda \tr x
\end{array}
\ee
\end{proposition}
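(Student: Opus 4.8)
The plan is to recast \eqref{gl} in a symmetric form, read the shape of the solution off its restriction to scalar matrices, and then transport that information to the whole cone by a cascade of limiting arguments that reduce the problem to the logarithmic Cauchy--Pexider equation of Proposition~\ref{fog}, the additive Cauchy equation of Lemma~\ref{jensen}, and Lemma~\ref{lambda}.

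\textbf{Symmetrisation.} For fixed $u$ the map $w\mapsto\P([\e+u]^{1/2})w$ is a bijection of $\Omega_+$ with $\P([\e+u]^{-1/2})\P([\e+u]^{1/2})w=w$; putting $v=\P([\e+u]^{1/2})w$ turns \eqref{gl} into
\[
A(u)+B\big(\P([\e+u]^{1/2})w\big)=C(w)+D\big(\P([\e+w]^{1/2})u\big),\qquad u,w\in\Omega_+,\quad(\star)
\]
which is quasi-symmetric under $u\leftrightarrow w$, so the interchanged equation is available as well. I also record the identities $\det\P([\e+u]^{1/2})w=\det(\e+u)\det w$, $\det\big(\e+\P([\e+u]^{1/2})w\big)=\det\big(\e+(\e+u)w\big)$ and $\tr u+\tr\P([\e+u]^{1/2})w=\tr w+\tr\P([\e+w]^{1/2})u$; these are exactly what make the $\log\det$, $\log\det(\e+\,\cdot\,)$ and $\tr$ blocks of the candidate solution compatible with $(\star)$.

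\textbf{The scalar diagonal.} Restricting $(\star)$ to $u=s\e$, $w=t\e$ (so $\P([\e+s\e]^{1/2})w=(1+s)w$, $\P([\e+t\e]^{1/2})u=(1+t)u$, etc.) reduces to the one-dimensional equation \eqref{1wym} for $a(s):=A(s\e)$, $b(t):=B(t\e)$, $c(t):=C(t\e)$, $d(s):=D(s\e)$. Lemma~\ref{wym1} then gives the explicit form of $A,B,C,D$ along $\{s\e:s>0\}$, which fixes the candidate constants $a,b,\lambda,c_1,c_2,d$ (with the linear relation among them) and --- what actually drives the argument --- the asymptotics of each of the four functions along the diagonal as $s\to0^+$ and as $s\to\infty$.

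\textbf{Transfer to $\Omega_+$.} Fixing $w$, setting $u=t\e$ in $(\star)$ and letting $t\to0^+$ (so that $\P([\e+t\e]^{1/2})w=(1+t)w\to w$ and $\P([\e+w]^{1/2})(t\e)=t(\e+w)\to0$) shows, by continuity alone, that $\lim_{t\to0^+}\big[D\big(t(\e+w)\big)-A(t\e)\big]$ exists; the interchanged equation gives a companion limit. Subtracting $f(t\e)$ throughout and using the diagonal asymptotics from the previous step, one first establishes that the corner germs $\phi_f(x):=\lim_{t\to0^+}[f(tx)-f(t\e)]$ exist on all of $\Omega_+$, for $f\in\{A,B,C,D\}$ --- and it is here that Lemma~\ref{o_x_alfa} enters, to approach $0$ along an arbitrary ray rather than only through scalar matrices --- and then, by passing to the limit in $(\star)$ and invoking $\det\P([\e+u]^{1/2})w=\det(\e+u)\det w$, that $\phi_B(\P(\xi^{1/2})\eta)=\phi_B(\xi)+\phi_B(\eta)$ and its analogues hold; Proposition~\ref{fog} then forces $\phi_A=\phi_D=a\log\det$ and $\phi_B=\phi_C=b\log\det$, and feeding this back into the limits yields $D-A=b\log\det(\e+\,\cdot\,)+\mathrm{const}$ and $B-C=a\log\det(\e+\,\cdot\,)+\mathrm{const}$. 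Subtracting the identified $\log\det$-blocks, the residual functions satisfy again an equation of type $(\star)$, and the same machinery applies once more: corner- and infinity-limits feeding Proposition~\ref{fog} pin the $\log\det(\e+\,\cdot\,)$-coefficients of $A,B,C,D$ to $-b,0,-a,0$; a reduction to the additive Cauchy equation of Lemma~\ref{jensen} shows that the remaining linear parts all equal one and the same $\langle c,\cdot\rangle$; and plugging back into $(\star)$, the surviving constraint on $c$ reads $\langle c,\P(u)v^2\rangle=\langle c,\P(v)u^2\rangle$, so $c=\lambda\e$ by Lemma~\ref{lambda}. The final residue has trivial germs everywhere and is therefore constant; matching all additive constants against the scalar diagonal produces the stated formulas.

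\textbf{Main obstacle.} The heart of the argument --- and where I expect the real work to lie --- is the transfer step: $A,B,C,D$ are merely continuous and blow up like $\log\det$ at $\partial\Omega_+$, so each limit has to be arranged so that the divergent $\log$- and linear-contributions cancel \emph{before} passing to the limit, and the resulting Cauchy/Pexider equations must be produced precisely on the sub-domains where Proposition~\ref{fog} and Lemma~\ref{jensen} apply. The involutivity of $T$ (hence the availability of the interchanged equation) and Lemma~\ref{o_x_alfa} (which upgrades scalar-direction limits to limits along arbitrary rays) are the two ingredients that make this go through.
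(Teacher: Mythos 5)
Your plan follows essentially the same route as the paper's proof: symmetrize via the substitution $v=\P([\e+u]^{1/2})w$, solve the scalar restriction with Lemma~\ref{wym1}, identify $B-C$ and $D-A$ through corner limits fed into Proposition~\ref{fog}, reduce the residuals to the additive Cauchy equation and apply Lemma~\ref{jensen}, and finish with Lemma~\ref{lambda} to force $c=\lambda\e$. The only cosmetic differences are your bookkeeping of corner germs for all four functions (the paper only needs those of $A$ and, via the involution, of $C$) and your placement of Lemma~\ref{o_x_alfa}, which the paper actually deploys in the additivity step for the residuals (via limits at infinity, $y=u+\e/\alpha$) rather than for the corner germs; neither affects the argument.
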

\begin{proof}
The proof is divided into three steps.
\paragraph*{\bf Step 1}
Plugging $u=\alpha \e$, $v=\beta \e$, $\alpha,\beta>0$ into Eq. \eqref{gl}, we have
\bel{11w}
\widetilde A(\alpha ) + \widetilde B(\beta) = \widetilde C\left(\frac{\beta}{1+\alpha}\right) + \widetilde D\left(\alpha\left(1+\frac{\beta}{1+\alpha}\right)\right),
\ee
where
$$
\widetilde A(\alpha):=A(\alpha\e),\:
\widetilde B(\alpha):= B(\alpha\e), \;
\widetilde C(\alpha) :=C(\alpha\e), \;
\widetilde D(\alpha):=D(\alpha\e).$$
Since we assume that functions $A$, $B$, $C$, $D$ are continuous, then we can use Lemma \ref{wym1} and obtain, inter alia, that
\bel{a}\widetilde A(x)=a\log x - b\log(1+x) - c x + c_1,\ee
where constants $a,b,c, c_1$ are positive.
This observation will be used in Step 3.
\paragraph{\bf Step 2}
Set $v= \P(\e+\alpha \tilde u)^{1/2}x$ and $u=\alpha \tilde u$, $x, \tilde u\in\Omega_+$, $\alpha>0$, in \eqref{gl} to get:
\bel{eq_pom}
A(\alpha \tilde u) + B\left(\P(\e+\alpha \tilde u)^{1/2}x\right) = C(x) + D\left(\alpha\P(\e + x)^{1/2}\tilde u\right).
\ee
When $\alpha\to 0$ we have
$$H(\e+x):=B(x)-C(x) = \lim_{\alpha\to 0}\left\{D\left(\alpha \P(\e +x)^{1/2}\tilde u\right) - A(\alpha \tilde u) \right\}.$$
Note that the limit on the right-hand side does not depend on $\tilde u\in\Omega_+$. Therefore, for $\tilde u = \P(\e+x)^{-1/2}(\e+y)$, $y\in\Omega_+$ we get:
$$\begin{array}{ccl}
H(\e+x)&=&\displaystyle \lim_{\alpha\to 0}\left\{D\left(\alpha(\e+ y)\right) - A\left(\alpha \P(\e+x)^{-1/2}(\e+y)\right) \right\}\\
&=&\displaystyle\lim_{\alpha\to 0}\left\{D\left(\alpha(\e+ y)\right)-A(\alpha \e) +A(\alpha\e) - A\left(\alpha \P(\e+x)^{-1/2}(\e+y)\right) \right\}\\
&=&\displaystyle H(\e+y)+\lim_{\alpha\to 0}\left\{ A(\alpha\e) - A\left(\alpha \P(\e+x)^{-1/2}(\e+y)\right) \right\}

.\end{array}
$$
Denoting $$G\left(\P(\e+x)^{-1/2}(\e+y)\right) =- \lim_{\alpha\to 0}\left\{ A(\alpha\e) - A\left(\alpha \P(\e+x)^{-1/2}(\e+y)\right) \right\},$$ we have:
$$
H(\e+y) = H(\e+x) +G\left(P(\e+x)^{-1/2}(\e+y)\right) \mathrm{\;for\;any\;} x,y\in\Omega_+,
$$
which by Proposition \ref{fog} gives
  $$B(y) - C(y) = H(\e + y)=a\log\det(\e+y)+d_1$$ for any $y\in\Omega_+$, where $a,d_1\in \R$.
 
Notice that Eq. \eqref{gl} can be equivalently written as 
$$A\left(\P (\e+u)^{-1/2}v\right) + B\left(\P[\e+\P (\e+u)^{-1/2}v]^{1/2}u\right) = C(u)+ D(v).$$
Thus, if we repeat the procedure from Step 2 starting with this equation instead of \eqref{gl}, then we get $$D(y) - A(y) = b\log \det (\e+y) + d_2,\;\;b, d_2\in\R.$$ From the solution of one--dimensional Eq. \eqref{11w} it follows  that  $d_1=d_2 =d\in\R$.
  
\paragraph{\bf Step 3.}
The results of Step 2 allow us to define functions $f,g:\Omega_+\mapsto \R$  such that for $x\in\Omega_+$
\nobel
\begin{array}{rl}
A(x)=&a\log\det x - b\log\det(\e+x)+c_1 +f(x),\\
B(x)  =& b\log\det x +c_2+d+g(x),\\
C(x) =& b\log\det x - a\log\det (\e+x) +c_2+g(x),\\
D(x) = &a\log\det x +c_1+d+f(x),
\end{array}
\ee
and due to \eqref{eq_pom}, $f$ and $g$ satisfy

\bel{f_g}f(x) +g\left(\P(\e+x)^{1/2}y\right)=g(y) +f\left(\P(\e+y)^{1/2}x\right).\ee
Let $x= \alpha z_\alpha$, where $\alpha>0$,  $z_\alpha\in\Omega_+$ and $z_\alpha$ converges to $z\in \Omega_+$ when $\alpha$ tends to 0. Also set $y=y_\alpha = \beta z_\alpha ^{-1} - \e$ where $\beta >0$ is large enough for $y_\alpha$ to be in $\Omega_+$  for any $\alpha>0$ and also for the limit $\lim_{\alpha\to 0}y_\alpha\in\Omega_+$ (which is possible since $z_\alpha\to z\in \Omega_+$). Notice that
$\P(\e+y_\alpha)^{1/2} z_\alpha=\beta\e$. These observations and Eq. \eqref{f_g} allow us to write 

$$0=\lim_{\alpha\to 0}\left\{ f(\alpha z_\alpha) -f(\alpha\beta\e)\right\}.$$ 
From Step 1, Eq. \eqref{a}, we know that $\lim_{\alpha\to 0} f(\alpha\beta \e) =0$. Then  
\bel{star}\lim_{\alpha\to 0} f(\alpha z_\alpha) =0\ee
 for any $z_\alpha\in \Omega_+$ such that $z_\alpha\to z\in \Omega_+$. 

We will show that $f$ is additive. 
Firstly, we set $y= u+\e/\alpha$, $u\in\Omega_+$, and $x=x_\alpha$ defined in \eqref{x_def}. Note that $z$ used in definition  \eqref{x_def} is an arbitrary element from $\Omega_+$. 
From Lemma \ref{o_x_alfa} we know that $x_\alpha/\alpha$ converges to $z\in \Omega_+$, when $\alpha\to 0$. Thus, for $\alpha$ small enough 
$x_\alpha$ is inside the cone $\Omega_+$. Given \eqref{star}, we also have $f(x_\alpha)=f(\alpha x_\alpha/\alpha) \stackrel{\alpha\to 0}{\to} 0$. Note that $\P(\e+x_\alpha)^\frac{1}{2}(u + \e/\alpha) = z+u + \e/\alpha$. We rewrite Eq. \eqref{f_g} with those special $x$ and $y$. After taking the limit as  $\alpha\to0$ we obtain 
\bel{final1}f(z) = \lim_{\alpha\to 0}\left\{g\left(z+u+\frac{1}{\alpha}\e\right) - g\left(u+\frac{1}{\alpha}\e\right) \right\} \ee

On the other hand, if we plug $x=\alpha u$ and $y = \e/\alpha$  in Eq. \eqref{f_g} and take the limit as $\alpha\to 0$, we obtain
$\lim_{\alpha\to 0} \left\{g(u+\e/\alpha  ) - g(\e/\alpha)\right\} = f(u)$. Combining this result with Eq. \eqref{final1} we have
$$\begin{array}{ccl}
f(u) + f(z) &=& \lim_{\alpha\to 0} \left\{g\left(u+\frac{1}{\alpha}\e \right) - g\left(\frac{1}{\alpha}\e\right) + g\left(z+u+\frac{1}{\alpha}\e\right) - g\left(u+\frac{1}{\alpha}\e\right) \right\}\\
&=& \lim_{\alpha\to 0} \left\{ g\left(z+u+\frac{1}{\alpha}\e\right)- g\left(\frac{1}{\alpha}\e\right)   \right\}\\
&=& f(z+u)
\end{array}
$$ 
Note that this equation, $f(u) + f(z) = f(u+z)$, holds for all $u,z\in\Omega_+$. By Lemma \ref{jensen} we conclude that $f(x) = \left\langle c, x\right\rangle$ where $c\in \Omega_+$. Similarly, due to symmetry in \eqref{f_g}, we show that $g(x) = \left\langle \tilde{c}, x\right\rangle$. Eq. \eqref{f_g} with $x =\alpha u$, $y=\e/\alpha$ implies $\tilde c = c$. 

The last step of the proof is to show that $c=\lambda\e$ for real and positive $\lambda$. We will use Lemma \ref{lambda} to do that. For $f$ and $g$ identified above, Eq. \eqref{f_g} assumes the form
$$  \left\langle c, \alpha^2\P(\e+x)^{1/2}(\e+y)\right\rangle=  \left\langle c,\alpha^2\P(\e+y)^{1/2}(\e+x)\right\rangle$$
for any $\alpha>0$. Note that for any $u,v\in \Omega_+$ there exist $\alpha>0$, $x, y\in \Omega_+$ such that $\alpha(\e+x) = u^2$ and $\alpha(\e+y) = v^2$. Thus, we can use Lemma \ref{lambda} to conclude that  $c=\lambda \e$, where $\lambda>0$. 
Consequently, we have
\nobel
\begin{array}{rl}
A(x)=&a\log\det x - b\log\det(\e+x)+c_1 + \lambda\tr x,\\
B(x)  =& b\log\det x +c_2+d+\lambda\tr x,\\
C(x) =& b\log\det x - a\log\det (\e+x) +c_2+\lambda\tr x,\\
D(x) = &a\log\det x +c_1+d+\lambda\tr x.
\end{array}
\ee
\end{proof}

\section{Characterization of matrix--Kummer and Wishart distributions}\label{sec5}
In this section we prove the converse to the independence property from Theorem \ref{tw1}, that is  a new characterization of the martix--Kummer and the Wishart distributions. Similarly to the one--dimensional case considered in \cite{PW16}, we need to impose some regularity conditions on densities.
\begin{theorem}\label{tw2}
Let $X$ and $Y$ be independent random variables valued in $\Omega_+$ with positive and continuous densities. Assume that random matrices 
$$U = \P [(\e+X)^{-1/2}]Y\;\mathit{and}\; V = \P [ ( \e+ U)^{1/2}]X$$ are also independent. 

Then there exist $a>(r-1)/2$, $b>(r-1)/2 - a$ and $\lambda>0$ such that $X\sim \mk (a, b, \lambda\e)$ and $Y\sim\W(a+b, \lambda\e)$.
\end{theorem}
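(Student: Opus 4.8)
The plan is to derive from the independence assumption a functional equation of the exact form \eqref{gl}, apply Proposition \ref{rown} to identify the logarithmic densities, and then translate the resulting formulas back into the claimed distributional statements, checking the parameter ranges along the way. Concretely, write $f_X$, $f_Y$ for the (positive, continuous) densities of $X$ and $Y$, and let $h_U$, $h_V$ be the densities of $U$ and $V$; these exist and are positive because $T$ is a diffeomorphism of $\Omega_+^2$ with the explicit Jacobian computed in Proposition \ref{jakob}. The change of variables $(u,v)=T(x,y)$, together with independence on both sides, yields
\begin{equation*}
f_X(x)f_Y(y)\,|\j_{T}(x,y)| = h_U(u)\,h_V(v),
\end{equation*}
where $(x,y)=T^{-1}(u,v)=\left(\P[(\e+u)^{-1/2}]v,\ \P[(\e+x)^{1/2}]u\right)$ and $|\j_{T}(x,y)|$ is given by Proposition \ref{jakob}. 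Taking logarithms and using the relations $\det x=\det v\,\det(\e+u)^{-1}$, $\det(\e+x)=\det(\e+u)^{-1}\det(\e+u+v)$, $\det y=\det(\e+x)\det u$ (exactly the determinant identities already recorded in the proof of Theorem \ref{tw1}, which depend only on the algebraic form of $T^{-1}$ and not on the particular distributions), one collects all terms and obtains an identity of the shape $A(u)+B(v)=C(\P[(\e+u)^{-1/2}]v)+D(\cdots)$ for suitable continuous functions $A,B,C,D$ built from $\log f_X,\log f_Y,\log h_U,\log h_V$ and $\log\det(\cdot)$ terms. The one subtlety here is bookkeeping: one must solve for $\log f_X(x)$ and $\log f_Y(y)$ in terms of functions of $u$ and $v$ and the cross-term $\log\det(\e+u+v)$, and the cleanest route is to set, e.g., $A(u)=-\log h_U(u)-(\text{explicit }\log\det\text{ terms in }u)$, etc., so that after cancellation the equation becomes precisely \eqref{gl}; this requires the continuity of all four densities, which is assumed.

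Once \eqref{gl} is in hand, Proposition \ref{rown} applies verbatim and gives constants $a,b,c_1,c_2,d\in\R$ and $\lambda>0$ with $A,B,C,D$ equal to the stated combinations of $a\log\det x$, $b\log\det(\e+x)$, linear trace terms $\lambda\tr x$, and additive constants. Unwinding the definitions of $A,B,C,D$ then produces explicit formulas for $\log f_X$, $\log f_Y$ (and simultaneously for $\log h_U$, $\log h_V$): one finds
\begin{equation*}
\log f_X(x) = (\text{const}) + a'\log\det x - b'\log\det(\e+x) - \lambda'\tr x,
\end{equation*}
and a similar expression with only a $\log\det$ and trace term for $\log f_Y$, where $a',b',\lambda'$ are affine functions of $a,b,\lambda$ and of the fixed exponents $(r+1)/2$, $(r-1)/2$ coming from the Jacobian and the densities. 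Matching with the definitions in Section \ref{sec2}, $f_X$ is the density of $\mk(\alpha,\beta,\lambda\e)$ for appropriate $\alpha,\beta$ and $f_Y$ is the density of $\W(\gamma,\lambda\e)$; comparing the $\log\det x$ exponent of $f_Y$ with $\gamma-(r+1)/2$ and the $\tr$ coefficient forces $\gamma=\alpha+\beta$, exactly as in the direct Theorem \ref{tw1}. The scale matrix is $\lambda\e$ (a positive multiple of the identity) because $\lambda>0$ is scalar in Proposition \ref{rown}; this is the promised rigidity of the scale parameter mentioned after Theorem \ref{tw1}.

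It remains to verify that the exponents fall in the admissible ranges, i.e. that $\alpha>(r-1)/2$ and $\beta>(r-1)/2-\alpha$. Here one uses integrability: since $f_X$ and $f_Y$ are genuine probability densities on $\Omega_+$, the functions $(\det x)^{\alpha-(r+1)/2}(\det(\e+x))^{-(\alpha+\beta)}e^{-\lambda\tr x}$ and $(\det y)^{\gamma-(r+1)/2}e^{-\lambda\tr y}$ must be integrable over $\Omega_+$. Integrability of the Wishart-type factor near the boundary of the cone forces $\gamma>(r-1)/2$, which combined with $\gamma=\alpha+\beta$ is precisely $\beta>(r-1)/2-\alpha$; integrability of the matrix-Kummer factor near the boundary forces $\alpha>(r-1)/2$ (the behaviour at infinity is controlled by the exponential and causes no constraint, while $\beta$ is otherwise unrestricted in sign, consistent with the statement). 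With these inequalities established, $X\sim\mk(a,b,\lambda\e)$ and $Y\sim\W(a+b,\lambda\e)$ with $a:=\alpha$, $b:=\beta$, completing the proof.

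The main obstacle is the first paragraph: one must manipulate the change-of-variables identity carefully enough to land on \eqref{gl} in its precise form, in particular isolating the single genuinely two-variable term $\log\det(\e+u+v)$ and absorbing everything else into the four univariate-in-argument functions, so that Proposition \ref{rown} is directly applicable. The remaining steps — invoking Proposition \ref{rown}, reading off the densities, and checking parameter ranges via integrability at the boundary of $\Omega_+$ — are routine, the last one being essentially the same computation that underlies the well-definedness of the distributions in Section \ref{sec2}.
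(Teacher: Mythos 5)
Your proposal is correct and follows essentially the same route as the paper: change of variables with the Jacobian from Proposition \ref{jakob}, absorption of the Jacobian into the four unknown functions via $\tfrac{r+1}{2}\log\det$ correction terms so that the identity becomes exactly \eqref{gl}, and then Proposition \ref{rown} to read off the densities. Your explicit integrability check at the boundary of $\Omega_+$ to pin down the parameter ranges is a detail the paper leaves implicit, but it is not a different method.
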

\begin{proof}
Recall that $$T(x,y) =\left( \P\left[(\e+x)^{-\frac{1}{2}}\right] y,\; \P\left[\left(\e+\P\left[(\e+x)^{-\frac{1}{2}}\right] y\right)^{\frac{1}{2}}\right]x \right).$$ Then $(U,V) = T(X,Y)$ and $(X,Y) = T(U,V)$.

Independence of random variables together with continuity of their densities imply
\bel{eq20}f_U(u)f_V(v) = |J(u,v)|f_X(x)f_Y(y)
 \;\;\mathrm{for\; all\;} u, v\in \Omega_+,\ee
where $(x,y) =T(u,v)$ and the Jacobian $J$ of $T^{-1}$ is given in 
{Proposition \ref{jakob}}.
Taking logarithms of both sides in \eqref{eq20} and defining functions $A,B,C,D:{\Omega_+\to \R}$ as
$$\begin{array}{l}
A(u) =\log f_U(u)+\frac{r+1}{2}\log\det u, \\
B(u) =\log f_V(u)+\frac{r+1}{2}\log\det u,\\
C(u) =\log f_X(u) +\frac{r+1}{2}\log\det u, \\
D(u) =\log f_Y(u) +\frac{r+1}{2}\log\det u, 
\end{array}
$$
we can rewrite Eq. \eqref{eq20} in the following way
\bel{eq21}
A(u) + B(v) = C\left(\P (\e+u)^{-1/2}v\right) + D\left(\P\left(\e+\P (\e+u)^{-1/2}v\right)^{1/2}u\right),\;\; u,v\in\Omega_+.
\ee

From Proposition \ref{rown} it follows
\nobel
\begin{array}{rl}
A(x)=&a\log\det x - b\log\det(\e+x)+c_1 + \lambda\tr x,\\
B(x)  =& b\log\det x +c_2+d+ \lambda\tr x,\\
C(x) =& b\log\det x - a\log\det (\e+x) +c_2+ \lambda\tr x,\\
D(x) = &a\log\det x +c_1+d+ \lambda\tr x.
\end{array}
\ee
The latter and the fact, that functions $A$, $B$, $C$ and $D$ represent logarithms of densities of random variables, imply  $X\sim \mk (a, b, \lambda\e)$ and $Y\sim\W(a+b, \lambda\e)$.

\end{proof}
\section{Concluding remarks}
Recently P. Vallois indicated \footnote{During his talk at 10th International Conference of the ERCIM WG on Computational and Methodological Statistics (CMStatistics 2017),  16-18.12.2017.} that one can define a transformation which generalizes $T_0$ for random matrices, is different from \eqref{T_def} and also preserves independence of Wishart and matrix-Kummer random matrices. Namely, let 
$$T(x,y) = \left(\P(\e+x+y)(\e+x)^{-1} -\e,\; x+y - \left[\P(\e+x+y)(\e+x)^{-1} -\e\right]\right).$$
Vallois says, and this can be checked in standard way, that if $X\sim\mk(a,b,\Sigma)$ and $Y\sim \W(a+b, \Sigma)$ are independent, then $(U,V)=T(X,Y)$ are also independent. Note that since  $U+V = X+Y$ here, then $\Sigma$ can be any positive definite matrix, which was not true in our case. If the converse theorem holds, remains an open question.

We hope that the probabilistic results of this paper can help to state and prove an  analogous property in free (non-commutative) probability. Let us recall that in the case of the Matsumoto-Yor property, its analogue in free probability was accomplished through an appropriate matrix independence property, \cite{KSz17}. This problem is currently being under study. 

In \cite{PW16} authors formulated multivariate characterization of a product of $p-1$ Kummer random variables and one Gamma random variable, $p\geq 2$. There, Kummer is a marginal distribution of a certain $p$-dimensional distribution, called tree-Kummer distribution in the paper, see Section 3 in \cite{PW16}. For instance, when $p=2$, then this density is of the form
$$
f(x_1, x_2) \propto x_1^{a_1-1}x_2^{a_2 -1}\exp\{ -c(x_1+x_2 + x_1x_2)I_{(0,\infty)^2}(x_1,x_2) \}.
$$
Similarly, matrix-Kummer distribution appears naturally as a marginal distribution of the following generalization of bi-Wishart distribution
$$
f_{(X,Y)}(x,y) \propto (\det x)^{p-\tfrac{r+1}{2}} (\det y)^{q-\tfrac{r+1}{2}} \exp(-\langle c,x+y+xy\rangle)I_{\Omega_+\times\Omega_+}(x,y),\; c>0.
$$
Then the conditional distribution of $X$ given $Y$ is Wishart $\W(p, c(\e+y))$, while its marginal distribution is matrix-Kummer $\mk(p, q-p, c)$. Also a question arises if a multivariate version of our independence characterization holds in a matrix setting? 

\section{Acknowledgments}
It is a pleasure to thank J. Weso\l owski for his invaluable help and comments which greatly improved the manuscript. 

This research was supported by the grant 2016/21/B/ST1/00005 of National Science Center, Poland.



\end{document}